\renewcommand{\P}{{\rm P}}
\newcommand{\E}{{\rm E}}
\newcommand{\RR}{\mathbb{R}}
\newcommand{\diag}{\mathrm{diag}}
\newcommand{\ud}{\mathrm{d}}  
\renewcommand{\u}{{\mbox{\tiny $+$}}}
\renewcommand{\d}{{\mbox{\tiny $-$}}}
\newcommand{\EE}{\mathcal{E}}
\newcommand{\s}{\mathcal{S}}
\newcommand{\MM}{\mathcal{M}}
\newcommand{\vect}[1]{\boldsymbol #1}
\newcommand{\valpha}{\vect \alpha}
\newcommand{\vrho}{\vect \rho}
\newcommand{\vbeta}{\vect \beta}
\newcommand{\vomega}{\vect \omega}
\newcommand{\vmu}{\vect \mu}
\newcommand{\vnu}{\vect \nu}
\newcommand{\vsigma}{\vect \sigma}
\newcommand{\vone}{\vect 1}
\newcommand{\vzero}{\vect 0}
\newcommand{\vG}{\vect G}
\newcommand{\vm}{\vect m}
\newcommand{\wQ}{\widetilde Q}
\newcommand{\wM}{\widetilde M}
\newcommand{\wA}{\widetilde A}
\newcommand{\wrho}{\widetilde{\vect\rho}}
\newcommand{\vligne}[1]{\begin{bmatrix} #1 \end{bmatrix}}
\newcommand{\vare}{\varepsilon}
\newtheorem{defn}{Definition}[section]
\newtheorem{lem}[defn]{Lemma}
\newtheorem{thm}[defn]{Theorem}
\newtheorem{cor}[defn]{Corollary}
\newtheorem{rem}[defn]{Remark}
\newtheorem{ass}[defn]{Assumption}
\newcommand{\cqfd}{\hfill $\square$}
\newcommand{\qed}{\hfill $\square$}
\newenvironment{proof}{
      \noindent {\bf Proof }}{\qed
      \vspace{0.25\baselineskip}
}
\newcommand{\debproof}{\begin{proof}}
\newcommand{\finproof}{\end{proof}}
\definecolor{darkmagenta}{rgb}{0.5,0,0.5}
\definecolor{darkgreen}{rgb}{0,0.6,0}
\definecolor{darkblue}{rgb}{0,0,0.6}
\definecolor{darkred}{rgb}{0.8,0,0}
\definecolor{mellow}{rgb}{.847, 0.72, 0.525}
\begin{document}

\title{
Slowing time: Markov-modulated Brownian motion with a sticky boundary}

\author{Guy Latouche\thanks{Universit\'e Libre de Bruxelles,
D\'epartement d'informatique, CP 212, Boulevard du Triomphe, 1050
Bruxelles, Belgium, \texttt{latouche@ulb.ac.be}.} 
\and
Giang T. Nguyen\footnote{The University of Adelaide, School of Mathematical Sciences, SA 5005, Australia, \texttt{giang.nguyen@adelaide.edu.au}}
}
\date{\today}
\maketitle

\begin{abstract}
  We analyze the stationary distribution of regulated Markov modulated
  Brownian motions (MMBM) modified so that their evolution is slowed
  down when the process reaches level zero --- level zero is said to
  be {\em sticky}.  To determine the stationary distribution, we extend to
  MMBMs a construction of Brownian motion with sticky
  boundary, and we follow a Markov-regenerative approach similar to
  the one developed in past years in the context of
  quasi-birth-and-death processes and fluid queues.  We also rely on
  recent work showing that Markov-modulated Brownian motions may be
  analyzed as limits of a parametrized family of fluid queues.
We use our results to revisit the stationary distribution of the
well-known regulated MMBM.

\end{abstract}

\noindent
 \underline{Keywords}: Fluid queues, regenerative processes,
 Markov-modulated Brownian motion, sticky boundary.

\section{Introduction}
\label{s:intro}

Systems in real life are designed with feedback loops: a buffer in a
telecommunication network is not allowed to repeatedly overflow
without its input being throttled, water conservation measures are
taken before reservoirs get thoroughly dry, and so on.  This is our
reason for being interested in stochastic processes with {\em
  reactive} boundaries, that is, processes that change behaviour upon
hitting some boundary.  In the present paper, we focus on regulated
Markov modulated Brownian motions (regulated
MMBMs for short), with a {\em sticky boundary} at level 0.

An MMBM is a two-dimensional process $\{X(t), \varphi(t) : t \geq 0\}$
with $X(\cdot) \in  \RR$ and $\varphi(\cdot) \in \MM = \{1, \ldots, m\}$
with $m < \infty$.  The component $\{\varphi(t)\}$ is a continuous-time
Markov chain and controls as follows the evolution of $\{X(t)\}$:
\begin{equation*}
X(t) = \int_0^t \mu_{\varphi(s)} \, \ud s  + \int_0^t
\sigma_{\varphi(s)} \, \ud W(s).
\end{equation*}
Here, $\mu_i$ and $\sigma_i$, for $i \in \MM$, are real numbers with
$\sigma_i \geq 0$, and $\{W(t)\}$ is a standard Brownian motion
independent of $\{\varphi(t)\}$.  We call $\varphi(t)$ the phase at
time $t$ and $X(t)$ the level.

The process $\{Z(t), \varphi(t)\}$ regulated at zero is
\begin{equation}
   \label{e:Z}
   Z(t) = X(t)  + |\inf_{0 \leq s \leq t} X(s) |.
\end{equation}
We  assume that the MMBM is drifting to $-\infty$, so that $\{Z(t),
\varphi(t)\}$ has a stationary distribution;  this is made more precise
in Section~\ref{s:flip-flop} and we refer to Asmussen~\cite{asmus95b},
Rogers~\cite{roger94} for a general presentation of basic properties.
If $\sigma_i =0$ for all $i$, $\{Z(t), \varphi(t)\}$ is a fluid queue, a family of
processes extensively analyzed by Ramaswami~\cite{ram99}, da Silva
Soares and Latouche~\cite{dssl07} and Bean {\it et al.}~\cite{brt05},
among others.

Brownian motions with a sticky boundary were introduced by
Feller~\cite{felle52} in the 1950s.  Briefly stated, the regulated
Brownian motion is slowed down when it is at level 0, in such a way
that, without actually staying at zero for any interval of time of
positive length, it does spend in that level an amount of time with
positive Lebesgue measure.

The construction in Harrison and Lemoine~\cite{hl81} works as follows:
one starts with a Brownian motion $\{X^*(t): t \geq 0\}$ with parameters
$\mu$ and $\sigma^2$, define its regulator $R^*(t) = |\inf_{0 \leq
  s \leq t} X^*(s)|$ and define the regulated process as $Z^*(t) = X^*(t) +
R^*(t)$.  Next, one defines the functions $V^*(t) = t +
R^*(t)/\omega$, where $\omega > 0$ is some fixed constant, and
$\Gamma^*(t)$ such that $V^*(\Gamma^*(t))=t$.  Finally, one defines
\begin{equation}
   \label{e:bmhl}
Y^*(t) = Z^*(\Gamma^*(t)).
\end{equation}
We refer to $\Gamma^*$ as the new clock.
The process $\{Y^*(t)\}$ is a Brownian motion with sticky boundary, with
parameters $\mu$, $\sigma^2$, and $\omega$.
The time change in (\ref{e:bmhl}) modifies each trajectory of $Z^*$ by
increasing the time spent in state 0, while leaving $\{Y^*(t)\}$ to behave
exactly like $\{Z^*(t)\}$ away from zero.  

We extend in two ways this construction to Markov modulated Brownian
motion.  First, we define in Section~\ref{s:sticky-mmbms} a
straightforward generalization based on the regulator $R(t)$.  Taking
the phase into account, we decompose $R(t)$ as the sum of $m$
sub-regulators
\begin{equation}
   \label{e:ri}
r_i(t) = \int_0^t  {\mathbb 1}\{\varphi(s)=i\} \, \ud R(s)  
\end{equation}
for $i \in \MM$ and all $t$.  It is clear that $r_i(t)$ increases only when
$Z(t)=0$ and $\varphi(t)=i$, and that $R(t) = \sum_{i
  \in \MM} r_i(t)$.  Next, we define 
\begin{equation}
   \label{e:vomega}
V(t) = t + \sum_{i \in \MM} r_i(t)/\omega_i
\end{equation}
where $\omega_i >0$, and we define the function $\Gamma(t)$ such that
$V(\Gamma(t))=t$.  Observe that through the definition
(\ref{e:vomega}) of $V(t)$, we allow the clock $\Gamma(t)$ to slow
down at different rates for different phases.  Our new process is
$\{Y(t), \bar\varphi(t)\}$ with $Y(t)=Z(\Gamma(t))$,
$\bar\varphi(t)=\varphi(\Gamma(t))$.
 In Section~\ref{s:resampling} we investigate
another process, such that the marginal distribution of the phase is
allowed to change as a result of the process hitting the boundary.

To determine the stationary distribution of our processes, we
follow a Markov-regenerative approach.  We choose  points of regeneration
forming a subset of the epochs when the process hits level 0: let
$\{\Delta_n, n \geq 0\}$ denote a sequence of i.i.d. random variables
exponentially distributed with parameter~$q$.  We define
\begin{equation}
   \label{e:theta}
\theta_{n+1} = \inf\{ t > \theta_n + \Delta_{n+1} : Y(t) =0\},
\end{equation}
for $n \geq 0$, with $\theta_0 = 0$.  In short, once the process hits
the boundary, we start an exponential timer and we do not
register the instantaneous returns to 0 by the Brownian
motion; at the expiration of the timer, one is again
able to register the next hit at zero.
The process $\{\bar\varphi_n\}$ embedded at the regeneration epochs, with
$\bar\varphi_n = \bar\varphi(\theta_n)$, for $n \geq 1$, is an irreducible
discrete-time Markov chain with stationary
distribution  $\vrho$.  

We also define
\[
M_{ij}(x) = \E[\int_{\theta_n}^{\theta_{n+1}} \mathbb{1}\{Y(s) \in
[0,x], \bar\varphi(s)=j\} \,\ud s| \bar\varphi_n =i]
\]
independently of $n$; that is, $M_{ij}(x)$ is the expected sojourn
time of $\{Y(t), \bar\varphi(t)\}$ in $[0,x] \times \{j\}$ during an
inter-regeneration interval, given that the phase is $i$ at the
beginning of the interval.  The components of $\vm$, defined as $\vm =
M(\infty) \vone$, are the conditional expected lengths of intervals
between regeneration points, given the phase at the last regeneration:
its $i$th component is $m_i = \E[\theta_{n+1} - \theta_n |
\bar\varphi_n = i]$.  Finally, with
\[
G_i(x) = \lim_{t \rightarrow \infty} \P[ Y(t) \leq x, \bar\varphi(t)=i],
\]
and $\vG(x) = \vligne{G_1(x) & \ldots & G_m(x)}$, we have
\begin{equation}
   \label{e:G}
\vG(x) = (\vrho \vm)^{-1} \,\vrho M(x)
\end{equation}
(see \c{C}inlar~\cite[Section 10.7]{cinla75}).  With our
 choice (\ref{e:theta}) for the regeneration
points, $\vrho$ and $M$ are necessarily functions of the parameter
$q$, while the function $\vG$ is independent of $q$; as we show 
in  Theorem \ref{t:distribution}, the expression in the right-hand
side of (\ref{e:G}) is indeed independent of $q$.
We note that any set of regeneration points
will do, provided that they lead to a discrete-time Markov chain and
that one is able to determine the expected sojourn times between
regenerations.   

We rely on results obtained in Latouche and Nguyen~\cite{ln14,ln13} to
determine $\vrho$ and $M$.  We defined in \cite{ln13} a family of fast
oscillating fluid queues and we showed that MMBMs arise as limits
of such fluid queues as the speed of oscillation increases to
infinity.  We give in Section~\ref{s:flip-flop} the basic definition
of the approximating fluid queues as well as some properties that we
shall be using throughout the paper.  Before that, we show in
Section~\ref{s:stickyBM} how to construct a family of fast oscillating
fluid queues that converge to a Brownian motion with sticky boundary.

We determine in Section~\ref{s:sticky-mmbms} the stationary
distribution of our first family of Markov modulated processes with
sticky boundary.  In Section~\ref{s:regenerative}, we apply our
regenerative approach to the well-known regulated MMBM with one
boundary and obtain a new form for its stationary distribution; this
is discussed in Section~\ref{s:observation}, where we analyze the
physical meaning of our result and compare it to other expressions
available from the literature.  In Section~\ref{s:resampling}, we
define and analyze our second model for MMBMs with sticky boundary,
and we give some brief concluding remarks in
Section~\ref{s:conclusion}.

\paragraph{Notation}
We represent by $\vone$ a column vector of 1s and by $\vzero$ a vector of
0s.  We generally use the notation $X$ for unregulated processes, $Z$
for processes with one boundary, and $Y$ for processes with a sticky
boundary.  Also, a bar over a symbol, as in $\bar\varphi$, indicates
the phase process of the sticky version.

\section{Sticky Brownian motion}
\label{s:stickyBM}

As indicated in the introduction, we proceed in a manner similar to
Harrison and Lemoine~\cite{hl81}.   In this section, $\{X(t)\}$ denotes a Brownian
motion with parameters $\mu < 0$ and $\sigma^2 > 0$,
and the regulated process is $Z(t) = X(t) + R(t)$, where $R(t) = |\inf_{0
  \leq s \leq t} X(s)|$.  Our objective is to create a process that
behaves exactly like $Z$ when it is strictly positive but spends
more time at~0. To do this, one defines the function $V(t) = t +
R(t)/\omega$, where $\omega > 0$, and the function $\Gamma(t)$ such
that $V(\Gamma(t))=t$.  Finally, one defines
\[
Y(t) = Z(\Gamma(t)).
\]
The process $\{Y(t)\}$ is a sticky Brownian motion with parameters
$\mu$, $\sigma^2$ and~$\omega$.

The process $R(t)$ is non-negative, non-decreasing and continuous, and
remains constant when $Z(t) >0$.  Therefore, $V(t)$ is strictly
increasing and continuous, and $\Gamma(t)$ is well-defined, continuous
and strictly increasing.  The purpose of $\Gamma(t)$ is to serve as a
new clock, which increases at the same rate as $t$ when $Z(t) >0$, and
at a slower rate when $Z(t)=0$, that is, when $R(t)$ is increasing.
The stationary distribution of $Y$ is
\begin{equation}
   \label{e:bmsticky}
\lim_{t \rightarrow \infty} \P[Y(t) \leq x] =
\frac{|\mu|}{|\mu|+\omega} +
\frac{\omega}{|\mu|+\omega} (1-e^{2\mu x/\sigma^2})
\end{equation}
(\cite[p. 221]{hl81}), notice that it has a mass at zero.

As an approximation to $\{X(t)\}$, we define a family of two-state 
fluid queues $\{X_\lambda(t), \kappa_\lambda(t)\}$ indexed by
$\lambda$.  The generator  of the phase process
$\{\kappa_\lambda(t)\}$ is
\[
T = \left[ 
\begin{array}{rr}
-\lambda & \lambda \\ \lambda & - \lambda
\end{array}
\right]
\]
and the fluid rates are $c_1 = \mu + \sigma \sqrt\lambda$ and $c_2 =
\mu - \sigma \sqrt\lambda$.  It is shown in Ramaswami~\cite{ram99}
that $\{X_\lambda(t)\}$ converges weakly to $\{X(t)\}$ as $\lambda$
tends to~$\infty$.  We use the term {\em flip-flop} processes to
characterize $\{X_\lambda(t)\}$ and other fluid queues to be defined
in later sections,
as a shorthand reminder of their behavior: the fluid queue
switches steadily faster, as $\lambda$ increases, between two
increasing fluid rates.
We use the regulator $R_\lambda (t) = |\inf_{0 \leq s \leq t}
X_\lambda (s)|$ and the regulated process $Z_\lambda(t) =
X_\lambda(t) + R_\lambda(t)$ to define a two-state flip-flop fluid
queue $\{Y_\lambda(t)\}$ with a sticky boundary at zero.

The total time spent by $Z_\lambda(t)$ at level 0 is 
\[
L_\lambda^0 (t)
= \int_0^t  {\mathbb 1}\{Z_\lambda (s) =0\} \, \ud s.
\]  
It is a sum of individual intervals, each of which is exponentially
distributed with parameter $\lambda$.
To define the process $\{Y_\lambda (t)\}$, we change the behaviour of
the phase when the level is 0, and we assume that the intervals of
time spent there are now exponentially distributed with parameter $a
\sqrt\lambda$ instead of $\lambda$, for some $a >0$.  Equivalently,
the intervals of time spent at level 0 are stretched by a factor
$\sqrt\lambda /a$ .
The function that defines the new time is 
\[
V_\lambda(t)  = t -L_\lambda^0 (t) + L_\lambda^0 (t)
\frac{\sqrt\lambda}{a} .
\]
Clearly,  $L_\lambda^0(t) = R_\lambda(t) / (\sigma \sqrt\lambda -
\mu)$, so that 
\begin{equation}
   \label{e:vlambda}
 V_\lambda(t) = t + R_\lambda (t) \frac{\sqrt\lambda -a}{a(\sigma \sqrt\lambda -
  \mu)}. 
\end{equation}
Finally, the new clock is given by the function $\Gamma_\lambda (t)$
such that $V_\lambda(\Gamma_\lambda(t)) = t$, and the fluid queue with
sticky boundary is $\{Y_\lambda(t), \bar\kappa_\lambda(t)\}$, with
$Y_\lambda(t) = Z_\lambda (\Gamma_\lambda(t))$ and
$\bar\kappa_\lambda(t) = \kappa_\lambda(\Gamma_\lambda(t))$.

\begin{thm}
   \label{t:laztflip-flop}
   The processes $\{Y_\lambda(t)\}$ weakly converge to the sticky
   Brownian motion $\{Y(t)\}$ with parameters $\mu$, $\sigma^2$ and
   $\omega = a \sigma$.
\end{thm}
\begin{proof}
To simplify our presentation, we rewrite (\ref{e:vlambda}) as
$V_\lambda(t) = t + R_\lambda(t) a_\lambda$.  We also rewrite the
equation $V_\lambda(\Gamma_\lambda(t))=t$ as
\begin{equation}
   \label{e:gammal}
\Gamma_\lambda(t) + R_\lambda(\Gamma_\lambda(t))a_\lambda = t.
\end{equation}
  We know by \cite[Corollary 3.3]{ln13} that $\{Z_\lambda(t)\}$ weakly
  converges to $\{Z(t)\}$ as $\lambda \rightarrow \infty$, and so 
  $\{R_\lambda(t)\}$ weakly converges to $\{R(t)\}$.  In addition, the
  coefficient of $R_\lambda(t)$ in (\ref{e:vlambda}) converges to
  $1/(a\sigma)$.   Therefore, the finite-dimensional distribution of
  $\{Y_\lambda(t)\}$ converges to the finite-dimensional distribution of
  the Brownian motion $\{Y(t)\}$ with a sticky boundary and parameters $\mu$,
  $\sigma^2$ and $\omega = a \sigma$, and we need only to prove
  tightness.

By Billingsley~\cite[Theorem 7.3]{billi99} we need to prove that the
processes are tight at time 0, which is obvious  since they are
all equal to zero, and that for all $\vare$, $\eta$, there exist
$\delta^* >0$ and $\lambda^*$ such that 
\[
\P[\sup_{|s-t|\leq\delta}|Y_\lambda(t) - Y_\lambda(s)| \geq\vare] \leq\eta
\]
for all $\lambda> \lambda^*$ and $\delta< \delta^*$.  Now, assume
temporarily that $s \leq t$.  We have by~(\ref{e:gammal})
\begin{align*}
\Gamma_\lambda(t) -  \Gamma_\lambda(s)  &= t-s -
(R_\lambda(\Gamma_\lambda(t)) - R_\lambda(\Gamma_\lambda(s))) a_\lambda
\\
  & \leq t-s,  
\end{align*}
since both $R_\lambda$ and $\Gamma_\lambda$ are non-decreasing
functions and $a_\lambda >0$  for sufficiently large $\lambda$.  The reverse inequality holds if $s \geq t$ and so
$|\Gamma_\lambda(t) -  \Gamma_\lambda(s) | \leq |t-s|$ for all $s$, $t$.
Therefore,
\begin{align*}
\sup_{|s-t|\leq\delta} |Y_\lambda(t) - Y_\lambda(s)|
& =
\sup_{|s-t|\leq\delta} |Z_\lambda(\Gamma_\lambda(t)) -
Z_\lambda(\Gamma_\lambda(s))| 
\\  & \leq \sup_{|s-t|\leq\delta}  |Z_\lambda(t) - Z_\lambda(s)|
\end{align*}
and
\[
\P[\sup_{|s-t|\leq\delta} |Y_\lambda(t) - Y_\lambda(s)| \geq \vare]
\leq \P[\sup_{|s-t|\leq\delta} |Z_\lambda(t) - Z_\lambda(s)| \geq \vare].
\]
This completes the proof since we know by Ramaswami~\cite[Theorem
5]{ramas13} and Whitt~\cite[Corollary 7]{whitt70} that
$\{Z_\lambda(t)\}$ is tight.
\end{proof}

\begin{rem} \em It is necessary that the transition rate of $\kappa_\lambda$
  at level zero should grow like $\sqrt\lambda$.  To see this, let us
  assume that we use some general function $f(\lambda)$ instead of $a
  \sqrt\lambda$.  The stretching factor is $\lambda / f(\lambda)$ and
  (\ref{e:vlambda}) becomes
\begin{align*}
V_\lambda (t) & = t + L_\lambda^0 (t) ( \frac{\lambda}{f(\lambda)}-1)
\\
  & = t + R_\lambda (t)  \frac{\lambda-
    f(\lambda)}{(\sigma \sqrt\lambda - \mu) f(\lambda)}  \\
  & = t + R_\lambda (t)  \frac{1-
    f(\lambda)/ \lambda}{ (\sigma - \mu/
    \sqrt\lambda) f(\lambda) /\sqrt\lambda}.
\end{align*}
If $f(\lambda) /\sqrt\lambda \rightarrow 0$ as $\lambda \rightarrow \infty$, then $V_\lambda (t)
\rightarrow \infty$ for all $t$ such that $R_\lambda(t) >0$, and the
limit of $\{Y_\lambda(t)\}$ is 0 for all $t$.  On the contrary, if $f(\lambda) /\sqrt\lambda \rightarrow \infty$, then
$V_\lambda (t) \rightarrow t$, $Y_\lambda(t) \rightarrow Z(t)$,
and the limiting process is just the original regulated Brownian motion.
\end{rem}

\section{Preliminaries} 
\label{s:flip-flop}

We assume in the sequel that $\{\varphi(t)\}$ is an irreducible Markov
process with generator $Q$ and we make the following two assumptions.

\begin{ass}
   \label{a:sigma}
The variances $\sigma_i$ are strictly positive for all $i$ in $\MM$.
\end{ass}
This assumption allows us to significantly simplify our presentation.

\begin{ass}
   \label{a:drift}
   The stationary drift $\valpha \vmu$ is strictly negative, where
   $\vmu = \vligne{\mu_1 &\ldots &\mu_m}$ and $\valpha$ is the
   stationary probability vector of $\{\varphi(t)\}$, that is,
   $\valpha Q = \vzero$, $\valpha \vone = 1$.
\end{ass}
The inequality $\valpha \vmu < 0$ is the necessary and sufficient
condition for our regulated MMBMs to have a stationary probability
distribution.

We shall use the same approach as in \cite{ln13} and start our
analysis from a parametrized family of approximating fluid queues
driven by a two-dimensional phase process $\{\kappa_\lambda(t),
\varphi_\lambda(t)\}$ on the state space $\{1,2\}\times \MM$, with
generator
\begin{equation}
   \label{e:qstar}
Q^*_\lambda = \vligne{Q-\lambda I & \lambda I \\ \lambda I & Q -
  \lambda I}
\end{equation}
and fluid rate matrix 
\[
C^*(\lambda) = \vligne{D+ \sqrt\lambda \Theta & \\ & D- \sqrt\lambda \Theta },
\]
where $D = \diag(\mu_1, \ldots , \mu_m)$ and $\Theta = \diag(\sigma_1,
\ldots, \sigma_m)$.   Next, we define
\[
X_\lambda(t) = \int_0^t C^*_{\kappa_\lambda(s), \varphi_\lambda(s)}
(\lambda) \, \ud s 
\]
and $Z_\lambda(t) = X_\lambda(t) + |\inf_{0 \leq s \leq t}
X_\lambda(s)|$.  For $\lambda$ large enough, the rates $\mu_i +
\sigma_i \sqrt\lambda$ corresponding to $\kappa_\lambda = 1$ are all positive,
and the rates $\mu_i - \sigma_i \sqrt\lambda$ corresponding to $\kappa_\lambda
= 2$ are all negative.  It is shown in \cite{ln13} that the processes
$\{Z_\lambda(t), \varphi_\lambda(t)\}$ weakly converge to the MMBM
$\{Z(t), \varphi(t)\}$ as $\lambda \rightarrow \infty$.
In this paper, the phase process $\{\kappa_\lambda(t),
\varphi_\lambda(t)\}$ will define the evolution of our regulated
processes whenever the fluid level is strictly positive.  Different
rules will apply at level 0 for different processes, and will be
separately detailed in each case.

We use the same definition (\ref{e:theta}) for the regeneration points
of all processes $\{Z_\lambda(t), \kappa_\lambda(t),
\varphi_\lambda(t)\}$ and we omit to indicate that the $\theta_n$s depend on
$\lambda$, so as not to clutter the notation unduly.  
A key quantity for the analysis of fluid queues is the matrix of first
return probabilities to level 0, starting from level 0 in a phase with
strictly positive fluid rate.   Because different processes have
different behaviors at level 0 but the same behavior away from the
boundary, it will be useful to use the
sequence $\{\tau_n\}$ of first instants when the fluid starts
increasing away from level 0 after a regeneration epoch:
\[
\tau_n = \inf\{t > \theta_n : \kappa_\lambda(t) =1\},
\] 
for $n \geq 0$.
During an interval $(\tau_n, \theta_{n+1})$, we need to know at any
given time whether the current timer has expired or not.  For that
reason, we add a third phase component, named $\chi_\lambda$, with
$\chi_\lambda = 1$ if the timer has not expired yet, and $\chi_\lambda
= 2$ otherwise.  The two-dimensional phase $(\kappa_\lambda,
\varphi_\lambda)$ always evolves according to the transition matrix
(\ref{e:qstar}) and the transitions of $(\kappa_\lambda, \chi_\lambda,
\varphi_\lambda)$ are controlled, during an interval $(\tau_n,
\theta_{n+1})$, by the matrix
\begin{equation}
   \label{e:Tlambda}
T(\lambda) =
\left[ \begin{array}{cc|cc}
Q-\lambda I - q I & q I & \lambda I &   \\
& Q-\lambda I &  & \lambda I \\
\hline
\lambda I & & Q-\lambda I - q I & q I \\
 & \lambda I &  & Q-\lambda I
\end{array}
\right].
\end{equation}
At regeneration times, the new component $\chi_\lambda$
instantaneously switches from $\chi_\lambda(\theta_n^\d) = 2$ to
$\chi_\lambda(\theta_n^\u) = 1$, where $\chi_\lambda(\theta_n^\d) =
\lim_{t \uparrow \theta_n}\chi_\lambda(t)$ and
$\chi_\lambda(\theta_n^\u) = \lim_{t \downarrow
  \theta_n}\chi_\lambda(t)$.
The diagonal matrix of fluid rates is
\[
{C(\lambda)} = \vligne{ I_2 \otimes (D + \Theta \sqrt{\lambda})
 \\  &  I_2 \otimes (D - \Theta \sqrt{\lambda}) }
\]
where $I_2$ is the identity matrix of order 2, and we partition the
state space into the subsets
\[
\s_\u  = \{(1, \chi_\lambda, \varphi_\lambda) : \chi_\lambda \in
\{1,2\}, \varphi_\lambda \in \MM\} 
\]
{and}
\[
\s_\d  = \{(2, \chi_\lambda, \varphi_\lambda) : \chi_\lambda \in
\{1,2\}, \varphi_\lambda \in \MM\}. 
\]
We similarly partition the matrix $T(\lambda)$ as
\[
T(\lambda) = \vligne{T_{\u\u}(\lambda)  & T_{\u\d}(\lambda) \\
  T_{\d\u}(\lambda)  & T_{\d\d}(\lambda)  }
\]
and we write
$
{C}_\u =  I_2 \otimes (D + \Theta \sqrt{\lambda})
$
and 
$
{C}_\d = I_2 \otimes (D - \Theta \sqrt{\lambda}).
$

The matrix of first return probabilities, indexed by $\s_\u \times
\s_\d$, is denoted as $\Upsilon_\lambda$ and defined by
\begin{align}
   \nonumber
(\Upsilon_\lambda)_{k,i;k',j} =\P[\xi < \infty, \, &\chi_\lambda(\xi) = k',
\varphi_\lambda(\xi) = j   \\
   \label{e:Ul}
 &| Z_\lambda(0)=0,  \kappa_\lambda(0) =
1, \chi_\lambda(0)=k, \varphi_\lambda(0)=i ]
\end{align}
where $\xi = \inf\{t
  >0: Z_\lambda(t)=0\}$ is the first return time to level 0. 
It is well known (Rogers~\cite{roger94}) that $\Upsilon_\lambda$ is
the minimal non-negative solution of the Riccati equation
\[ {C}_\u^{-1} T_{\u\d}(\lambda) + {C}_\u^{-1}
T_{\u\u}(\lambda) X + X |{C}_\d|^{-1}
T_{\d\d}(\lambda) + X |{C}_\d|^{-1}
T_{\d\u}(\lambda) X = 0.
\]
One easily verifies that it has the structure
\[
\Upsilon_\lambda = \vligne{\Psi _\lambda (q) & \Psi^c_\lambda(q) \\ 0 & \Psi_\lambda}
\]
where 
\begin{itemize}
\item $\Psi _\lambda (q)$ is the probability matrix of returning
  to the original level {\em before} the exponential timer expires, it
  is the minimal non-negative solution of
\begin{align*}
\lambda (D+ &\sqrt\lambda\Theta)^{-1} 
+ (D+\sqrt\lambda\Theta)^{-1} (Q-\lambda I - q I) X
\\ &
+ X |D-\sqrt\lambda\Theta|^{-1} (Q- \lambda I - q I )
+ \lambda X |D - \sqrt\lambda\Theta|^{-1} X =0,
\end{align*}
\item 
$\Psi _\lambda = \Psi_\lambda(0)$ is the return probability matrix without any time
constraint, and 
\item $\Psi^c _\lambda (q)$ is the probability of returning to
  the original level {\em after} the exponential timer has expired, so
  that
\begin{equation}
   \label{e:barpsi}
\Psi^c_\lambda (q) = \Psi _\lambda -   \Psi _\lambda (q).
\end{equation}
\end{itemize}
At level 0, we need two transition matrices.  The first matrix
has entries
\begin{equation}
   \label{e:pzero}
   (P_{\lambda,0})_{ij} = \P[\tau_n > \theta_n + \Delta_{n+1}, \varphi_\lambda(\theta_n +\Delta_{n+1}) =
   j | \varphi_\lambda(\theta_n)=i], 
\end{equation}
for $i$, $j$ in $\MM$: these are the probabilities that the process
continuously remains at level 0 until the expiration of the
exponential timer, at which time $\varphi_\lambda =j$, given the
phase at time $\theta_n$  is $i$.  We need not specify the other phase
component as 
$\kappa_\lambda(\theta_n+ \Delta_{n+1}) =2$.  The second matrix is
\begin{equation}
   \label{e:pone}
(P_{\lambda,1})_{ij} = \P[\tau_n < \theta_n +
  \Delta_{n+1}, \varphi_\lambda(\tau_n) = j| \varphi_\lambda(\theta_n)=i],  
\end{equation}
for $i$, $j$ in $\MM$: the exponential timer has not yet gone off at time
$\tau_n$, the component $\kappa_\lambda$ switches from 2 to 1 and the component
$\chi_\lambda$ remains equal to~1.

The phase transition matrix at regenerative epochs is
$\Phi _\lambda$, with
\[
(\Phi _\lambda)_{ij} = \P[ \varphi_\lambda(\theta_{n+1}) = j | \varphi_\lambda(\theta_n)=i ].
\]
Again, we do not need to specify the remaining components of the phases:
$\kappa_\lambda(\theta_n) = 2$ since the fluid rate is negative at
that time and $\chi_\lambda(\theta_n^\u) = 1$ since a new timer
interval begins immediately after the regeneration.

\begin{lem}
   \label{t:Phi}
The transition matrix at the regeneration epochs is given by
\begin{equation}
   \label{e:phi}
\Phi_\lambda = I - (I- P_{\lambda,1} \Psi _\lambda (q))^{-1} (I
-P_{\lambda,0} - P_{\lambda,1} \Psi _\lambda),
\end{equation}
Its stationary probability vector $\vrho_\lambda$ is 
\begin{equation}
   \label{e:rnlambda}
\vect\rho _\lambda  = c \vect\nu _\lambda (I- P_{\lambda,1} \Psi _\lambda (q))
\end{equation}
for some scalar $c$, where $\vect\nu _\lambda$ is such that
\begin{equation}
   \label{e:nulambda}
   \vect\nu _\lambda (P_{\lambda,0} + P_{\lambda,1} \Psi _\lambda)  = \vect\nu _\lambda
, \qquad \qquad \vect\nu _\lambda \vone = 1.
\end{equation}
\end{lem}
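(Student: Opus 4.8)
The plan is to decompose a single inter-regeneration interval $(\theta_n, \theta_{n+1})$ into a phase spent at level $0$ and a phase spent away from it, then track how the phase $\varphi_\lambda$ propagates through each piece by conditioning on whether the exponential timer expires before or after the fluid first starts climbing. I would begin by describing the taboo structure of the interval. Starting at $\theta_n$ we have $\kappa_\lambda = 2$ and a fresh timer ($\chi_\lambda = 1$). Two things can happen first: either the timer expires while the process is still stuck at level $0$ — this is exactly the event recorded by $P_{\lambda,0}$, after which $\chi_\lambda = 2$ and the next up-crossing to level $0$ becomes the regeneration point $\theta_{n+1}$ — or the fluid starts increasing (at $\tau_n$) while the timer is still running, recorded by $P_{\lambda,1}$, leaving $\chi_\lambda = 1$.

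Next I would set up a renewal-type (first-passage) decomposition of $\Phi_\lambda$ by summing over the number of excursions away from level $0$ that end before the timer goes off. The key bookkeeping is this: after an up-excursion governed by $P_{\lambda,1}$, the process returns to level $0$, and the phase at that return is distributed according to either $\Psi_\lambda(q)$ (timer still running, so $\chi_\lambda = 1$ and the cycle is \emph{not} over) or $\Psi^c_\lambda(q) = \Psi_\lambda - \Psi_\lambda(q)$ (timer expired during the excursion, so $\chi_\lambda = 2$ and this return \emph{is} the regeneration). A regeneration $\theta_{n+1}$ therefore occurs when one of two terminal events happens: the process sits at $0$ through a timer expiry ($P_{\lambda,0}$), or an up-excursion straddles the timer expiry and returns ($P_{\lambda,1}\Psi^c_\lambda(q)$). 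Excursions that return before expiry ($P_{\lambda,1}\Psi_\lambda(q)$) are non-terminal and feed back into the start of a new sub-cycle. This gives the geometric sum
\begin{equation*}
\Phi_\lambda = \sum_{k \geq 0} (P_{\lambda,1}\Psi_\lambda(q))^k \,\big(P_{\lambda,0} + P_{\lambda,1}\Psi^c_\lambda(q)\big)
= (I - P_{\lambda,1}\Psi_\lambda(q))^{-1}\big(P_{\lambda,0} + P_{\lambda,1}\Psi^c_\lambda(q)\big).
\end{equation*}
Substituting $\Psi^c_\lambda(q) = \Psi_\lambda - \Psi_\lambda(q)$ and rearranging, one writes $P_{\lambda,0} + P_{\lambda,1}\Psi_\lambda - P_{\lambda,1}\Psi_\lambda(q) = (P_{\lambda,0} + P_{\lambda,1}\Psi_\lambda) - P_{\lambda,1}\Psi_\lambda(q)$, and adding and subtracting $(I - P_{\lambda,1}\Psi_\lambda(q))$ inside yields precisely $\Phi_\lambda = I - (I - P_{\lambda,1}\Psi_\lambda(q))^{-1}(I - P_{\lambda,0} - P_{\lambda,1}\Psi_\lambda)$, which is~(\ref{e:phi}).

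For the stationary vector, the claim is that $\vrho_\lambda \propto \vnu_\lambda(I - P_{\lambda,1}\Psi_\lambda(q))$ where $\vnu_\lambda$ solves~(\ref{e:nulambda}). I would verify this directly by substitution: setting $\vrho_\lambda = c\,\vnu_\lambda(I - P_{\lambda,1}\Psi_\lambda(q))$ and computing $\vrho_\lambda\Phi_\lambda$, the factor $(I - P_{\lambda,1}\Psi_\lambda(q))$ cancels against the inverse in~(\ref{e:phi}), reducing the fixed-point equation $\vrho_\lambda\Phi_\lambda = \vrho_\lambda$ to the condition $\vnu_\lambda(P_{\lambda,0} + P_{\lambda,1}\Psi_\lambda) = \vnu_\lambda$, which is exactly~(\ref{e:nulambda}); the normalizing constant $c$ is fixed by $\vrho_\lambda\vone = 1$. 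The main obstacle is not the algebra — that is routine once the decomposition is in place — but rather justifying the probabilistic decomposition rigorously, namely that the successive excursions are conditionally independent given the entry phase and that $\Psi_\lambda(q)$, $\Psi^c_\lambda(q)$, $P_{\lambda,0}$, $P_{\lambda,1}$ correctly capture the phase propagation together with the timer status $\chi_\lambda$. This relies on the strong Markov property of $\{\kappa_\lambda, \chi_\lambda, \varphi_\lambda\}$ at the successive returns to level $0$ and on the memorylessness of the exponential timer, which guarantees that each non-terminal return restarts a statistically identical sub-cycle.
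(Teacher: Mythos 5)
Your proposal is correct and follows essentially the same route as the paper: the same renewal decomposition at level 0 (timer expires at the boundary via $P_{\lambda,0}$, an excursion straddles the expiry via $P_{\lambda,1}\Psi^c_\lambda(q)$, or an excursion returns early via $P_{\lambda,1}\Psi_\lambda(q)$ and the cycle restarts), which is the paper's fixed-point equation written as a geometric sum, followed by the same algebraic rearrangement and the same identification of $\vrho_\lambda$ through $\vnu_\lambda$. The only points the paper makes explicit that you leave implicit are the invertibility of $I - P_{\lambda,1}\Psi_\lambda(q)$ (there is positive probability of returning only after the timer expires, so $P_{\lambda,1}\Psi_\lambda(q)$ is strictly substochastic) and the existence and uniqueness of $\vnu_\lambda$ (under the negative-drift assumption $\Psi_\lambda \vone = \vone$, so $P_{\lambda,0}+P_{\lambda,1}\Psi_\lambda$ is an irreducible stochastic matrix).
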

\begin{proof}
The transition matrix satisfies the following equation,
\begin{equation}
   \label{e:phi1}
\Phi_\lambda = P_{\lambda,0} + P_{\lambda,1}  \Psi^c_\lambda(q) 
+ P_{\lambda,1} \Psi_\lambda(q) \Phi_\lambda.
\end{equation}
Indeed, at time $\theta_n$ the process is at level 0 and 
either it does not leave level 0 before the
timer expires (this corresponds to the first term), or it does leave
level zero and returns after the timer has expired (this is the second
term) or it leaves level 0 and returns before the timer has expired,
in which case we still have to wait for the next regeneration point
(this gives the third term).

Now, starting from level 0 and any phase in $\s_\u$, there is a
strictly positive probability that the process returns to level 0
after the timer has expired.  Thus, $\Psi^c_\lambda(q) \vone > \vzero$
or equivalently, $\Psi_\lambda(q) \vone < \vone$, so that $P_{\lambda,1}
\Psi_\lambda(q) $ is a strictly sub-stochastic matrix and $I -
P_{\lambda,1} \Psi_\lambda(q)$ is non-singular.  Thus, (\ref{e:phi1}) becomes
\[
  \Phi _\lambda
   = (I- P_{\lambda,1} \Psi _\lambda (q))^{-1} (P_{\lambda,0} +
  P_{\lambda,1} \Psi^c_\lambda (q) )  
\]
by (\ref{e:barpsi}).  We may rewrite the last equation as 
\[
\Phi _\lambda
  = (I- P_{\lambda,1} \Psi _\lambda (q))^{-1}
  (P_{\lambda,0} + P_{\lambda,1} (\Psi _\lambda - \Psi _\lambda
  (q)) ) 
\]
and (\ref{e:phi}) is proved.

The stationary probability vector $\vect \rho _\lambda$ is such that 
$
\vect\rho _\lambda \Phi _\lambda = \vect \rho _\lambda$,   or
\begin{equation}
   \label{e:rholambda}
\vect\rho _\lambda (I- P_{\lambda,1} \Psi _\lambda (q))^{-1} (I -
P_{\lambda,0} -  P_{\lambda,1} \Psi_\lambda) = \vzero,
\end{equation}
which proves (\ref{e:rnlambda}) as soon as we show that $\vnu_\lambda$
exists and is unique.

By Assumption \ref{a:drift}, the return time to level 0 is finite
a.s., so that $\Psi_\lambda \vone = \vone$ for all $\lambda$ and
$P_{\lambda,0} +  P_{\lambda,1} \Psi_\lambda$ is an irreducible
stochastic matrix, with a unique stationary probability vector
$\vnu_\lambda$.   This concludes the proof.
\end{proof}

\begin{rem} \em
   \label{r:rho}
It is obvious that the stationary distribution of the phase at epochs
of regeneration depends, through the transition matrices
$P_{\lambda,0}$ and $P_{\lambda,1}$, on the rules of evolution of the
phase when the fluid is at level  0.  In order to avoid confusion, we
use in the sequel the notation $\vrho_\lambda$ and its limit $\vrho$
for the  process associated with the MMBM with sticky boundary,
$\vrho^*_\lambda$ (with limit $\vrho^*$) for the flip-flop
process analyzed in Section~\ref{s:regenerative} and associated with
the traditional MMBM, and $\widetilde\vrho$ for the process analysed
in Section~\ref{s:resampling}, with sticky boundary and resampling of
the phase at level 0.
\end{rem}

\section{ MMBM with sticky boundary}
\label{s:sticky-mmbms}

As explained in the introduction, we start from the function $V(t)$
defined in~(\ref{e:vomega}), which is monotone and continuous.  We use
it to define the new clock $\Gamma$ such that $V(\Gamma(t)) = t$, and
to define the new process $\{Y(t), \bar\varphi(t)\}$ such that $Y(t) =
Z(\Gamma(t))$ and $\bar\varphi(t) = \varphi(\Gamma(t))$.

We call this process a
Markov-modulated Brownian motion with sticky boundary, with parameters
$\vmu$, $\vsigma$, $\vomega$ and $Q$.
To obtain its stationary distribution, we proceed in three steps: we
construct a family of approximating fluid queues, then we determine
the stationary distribution $\vrho$ at epochs of regeneration, and
finally we obtain the matrix $M(x)$ of expected time spent in $[0,x]$
during regeneration intervals.

We decompose the regulator $R_\lambda(t)= |\inf_{0 \leq s \leq t}
X_\lambda(s)|$ of the flip-flop fluid queue $\{X_\lambda(t),
\kappa_\lambda(t), \varphi_\lambda(t)\}$ into
its sub-regulators 
\begin{equation}
   \label{e:ril}
r_{\lambda,i}(t) = \int_0^t  {\mathbb 1}\{\varphi_\lambda(s)=i\} \, \ud R_\lambda(s),
\end{equation}
and we write $Z_\lambda$ as the sum $Z_\lambda = X_\lambda + \sum_i
r_{\lambda,i}$.   The total time spent by the process in phase $i$
during the interval $(0,t)$ is $U_{\lambda,i}(t) = \int_0^t {\mathbb
  1}\{\varphi(s) = i\} \, \ud s$, for $i \in \MM$.

We repeat for every phase the argument in
Section~\ref{s:stickyBM}: the time spent by $\{Z_\lambda(t)\}$ at
level~0 in phase $i$ is $r_{\lambda,i}(t) / |\mu_i-\sigma_i
\sqrt\lambda|$ and the functions that redefine time are
\begin{align*}
V_{\lambda,i}(t) & = U_{\lambda,i}(t) - r_{\lambda,i}(t) 
\frac{1}{|\mu_i-\sigma_i \sqrt\lambda|} + r_{\lambda,i}(t) 
  \frac{1}{|\mu_i-\sigma_i \sqrt\lambda| }
  \frac{\lambda + |Q_{ii}|}{a_i \sqrt\lambda + |Q_{ii}|} \\
  & = U_{\lambda,i}(t) + r_{\lambda,i}(t) 
  \frac{\lambda - a_i \sqrt\lambda}{(\sigma_i \sqrt\lambda -
    \mu_i)(a_i \sqrt\lambda + |Q_{ii}|)},
\end{align*}
for $i$ in $\MM$.
For $\lambda$ large enough, their sum is
\[
V_\lambda(t) = t + \sum_i r_{\lambda,i}(t) \frac{\lambda +
  O(\sqrt\lambda)}{a_i \sigma_i \lambda + O(\sqrt\lambda)}
\]
and converges to $V(t)$ defined in (\ref{e:vomega}) 
with $\omega_i = a_i \sigma_i$.  In matrix notation, we may write
\begin{equation}
\label{e:lazyV}
  V(t) = t + \vect r(t)  A^{-1} \Theta^{-1} \vone,
\end{equation}
where $A = \diag(a_1, \ldots a_m)$ and $a_i > 0$ for all $i$.
With the function $V_\lambda(t)$, we define the new clock
$\Gamma_\lambda$ such that $V_\lambda(\Gamma_\lambda(t))=t$, and the
new processes $\{Z_\lambda(\Gamma_\lambda(t)),
\kappa_\lambda(\Gamma_\lambda(t)),
\varphi_\lambda(\Gamma_\lambda(t)))$.  By \cite[Theorem 2.7]{ln13},
$\{Z_\lambda(t), \varphi_\lambda(t)\}$ weakly converge to $\{Z(t),
\varphi(t)\}$ and, by the continuity property of $\Gamma(t)$, it is
clear that the finite-dimensional distribution of
$\{Z_\lambda(\Gamma_\lambda(t)), \varphi_\lambda(\Gamma_\lambda(t))\}$
converge to the finite-dimensional distribution of $\{Y(t),
\bar\varphi(t)\}$.

The process $\{Z_\lambda(\Gamma_\lambda(t)),
\kappa_\lambda(\Gamma_\lambda(t)),
\varphi_\lambda(\Gamma_\lambda(t))\}$ is not very convenient, however,
as its definition does not conform to the usual parametrization of
fluid queues.  For that reason, we define another fluid queue, denoted
as $\{Y_\lambda(t), \bar\kappa_\lambda(t), \bar\varphi_\lambda(t)\}$.
This new process and $\{Z_\lambda(\Gamma_\lambda(t)),
\kappa_\lambda(\Gamma_\lambda(t)),
\varphi_\lambda(\Gamma_\lambda(t))\}$ are not pathwise identical 
but they have the same distribution.

The two-dimensional phase
$(\bar\kappa_\lambda,\bar\varphi_\lambda)$ is controlled by the
generator $Q^*_\lambda$ defined in (\ref{e:qstar}) as long as
$Y_\lambda$ is strictly positive.  When $Y_\lambda =0$, the
transition rates are given by the new matrix
\begin{equation}
   \label{e:q0}
Q_{\lambda,0} = \vligne{ \sqrt{\lambda} A & (1/\sqrt\lambda) A (Q-\lambda I)}.
\end{equation}
This means that while the process is in level 0,
\begin{itemize}
\item if $\varphi_\lambda = i$ and $\kappa_\lambda = 2$, intervals of
  time are stretched by a factor $\sqrt\lambda / a_i$,
\item there may be a change to $\varphi_\lambda = j$ with the new rate
  $\bar Q_{ij} = a_i Q_{ij} / \sqrt\lambda$ without changing
  $\kappa_\lambda$,
\item or the process may change to $\kappa_\lambda =1$ at the rate
  $a_i \sqrt\lambda$, and leave the level 0, without changing
  $\varphi_\lambda$.
\end{itemize}
We denote by $\vrho_\lambda$ the stationary distribution of
$\bar\varphi_\lambda$ at epochs of regeneration and by $M_\lambda(x)$
the matrix of conditional expected time spent by the fluid queue in
$[0,x]$.

\begin{lem}
   \label{t:stickyboundary}
The stationary distribution $\vrho_\lambda$  converges, as
$\lambda \rightarrow \infty$,  to the vector
$\vect\rho$ such that
\begin{equation}
   \label{e:rhost}
\vect\rho (q A^{-1} -\Theta U(q))^{-1} \Theta U  = \vzero,  \qquad \vect\rho
  \vone = 1.
\end{equation}
where  $U(q)$ is the unique solution of 
\begin{equation}
   \label{e:quadraticU}
\frac{1}{2} \Theta^{2}  X^2 + D X + (Q-q I) =0
\end{equation}
with eigenvalues of negative real parts, and where $U=U(0)$.  Both $U$ and
$U(q)$ are generators.  For $q > 0$, all eigenvalues of
$U(q)$ have strictly negative real parts; for $q = 0$, one
eigenvalue of $U$ is equal to 0, the others have strictly negative
real parts.

Furthermore,
\begin{equation}
   \label{e:rhonust}
\vect\rho ( q A^{-1} -\Theta U(q) )^{-1}  = c_1
\vect\nu
\end{equation}
for some scalar $c_1$, where 
\begin{equation}
   \label{e:nust}
\vect\nu \Theta U = \vzero,  \qquad \vect\nu \vone = 1.
\end{equation}
\end{lem}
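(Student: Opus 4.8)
The plan is to obtain the whole statement by letting $\lambda \to \infty$ in the expressions of Lemma~\ref{t:Phi}. By (\ref{e:rnlambda})--(\ref{e:nulambda}) we have $\vrho_\lambda = c\,\vnu_\lambda (I - P_{\lambda,1}\Psi_\lambda(q))$ with $\vnu_\lambda(P_{\lambda,0} + P_{\lambda,1}\Psi_\lambda) = \vnu_\lambda$ and $\vnu_\lambda\vone = 1$. Since at level $0$ the process leaves at rate $\sim\sqrt\lambda$ while the timer fires at the fixed rate $q$, one expects $P_{\lambda,1}\to I$, $P_{\lambda,0}\to 0$ and $\Psi_\lambda(q),\Psi_\lambda \to I$, so that both $I - P_{\lambda,1}\Psi_\lambda(q)$ and $P_{\lambda,0}+P_{\lambda,1}\Psi_\lambda - I$ vanish in the limit. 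The argument is therefore a singular perturbation: I would expand every factor to first order in $1/\sqrt\lambda$, so that the scalar $c$ absorbs the common factor $\sqrt\lambda$ and the surviving directions produce (\ref{e:rhost})--(\ref{e:nust}).

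First I would treat the two boundary matrices, which are governed by (\ref{e:q0}). Viewing the states $(2,i)$ at level~$0$ as transient with generator $G_\lambda = \tfrac{1}{\sqrt\lambda}AQ - \sqrt\lambda A - qI$ (the second block of $Q_{\lambda,0}$ together with the killing rate $q$ of the timer), and the exit to $\kappa_\lambda=1$ and the expiry of the timer as the two absorbing mechanisms, one reads off $P_{\lambda,1} = \sqrt\lambda(-G_\lambda)^{-1}A$ and $P_{\lambda,0} = q(-G_\lambda)^{-1}$; the identity $(-G_\lambda)\vone = \sqrt\lambda A\vone + q\vone$ confirms $P_{\lambda,0}\vone + P_{\lambda,1}\vone = \vone$. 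Factoring $-G_\lambda = \sqrt\lambda A(I + \tfrac{q}{\sqrt\lambda}A^{-1} - \tfrac1\lambda Q)$ and expanding gives $P_{\lambda,1} = I - \tfrac{q}{\sqrt\lambda}A^{-1} + O(1/\lambda)$ and $P_{\lambda,0} = \tfrac{q}{\sqrt\lambda}A^{-1} + O(1/\lambda)$.

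The crux is the first-order behaviour of the return matrices, which I claim is $\Psi_\lambda(q) = I + \tfrac{1}{\sqrt\lambda}\Theta U(q) + o(1/\sqrt\lambda)$, and in particular $\Psi_\lambda = I + \tfrac1{\sqrt\lambda}\Theta U + o(1/\sqrt\lambda)$, where $U(q)$ solves (\ref{e:quadraticU}). I would establish this by inserting the ansatz $\Psi_\lambda(q) = I + \tfrac1{\sqrt\lambda}W$ into the Riccati equation satisfied by $\Psi_\lambda(q)$ (displayed just before (\ref{e:barpsi})) and expanding the coefficients $(D\pm\sqrt\lambda\Theta)^{\pm1}$ and $Q - \lambda I - qI$ in powers of $\sqrt\lambda$. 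The orders $\sqrt\lambda$ and $1$ cancel identically, and the balance at order $1/\sqrt\lambda$ reduces, after setting $W = \Theta U(q)$, exactly to $\tfrac12\Theta^2U(q)^2 + DU(q) + (Q - qI)=0$; verifying this cancellation is the \emph{main obstacle}, the alternative being to invoke the convergence of fluid return matrices to the MMBM matrix $U$ proved in \cite{ln13,ln14}. The existence and uniqueness of the solution $U(q)$ of (\ref{e:quadraticU}) with eigenvalues of negative real part, and the location of its spectrum, are standard for MMBMs; that $U$ and $U(q)$ are generators follows because their off-diagonal entries are nonnegative and, right-multiplying (\ref{e:quadraticU}) by $\vone$ and using $Q\vone=\vzero$, $(\tfrac12\Theta^2U(q)+D)U(q)\vone = q\vone$, which gives $U\vone = \vzero$ at $q=0$ and $U(q)\vone \neq \vzero$ for $q>0$.

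Finally I would assemble the pieces. From the two previous paragraphs, $\sqrt\lambda(I - P_{\lambda,1}\Psi_\lambda(q)) \to qA^{-1} - \Theta U(q)$ and $\sqrt\lambda(P_{\lambda,0} + P_{\lambda,1}\Psi_\lambda - I) \to \Theta U$. Since $\Theta$ is a positive diagonal matrix, $\Theta U$ inherits from $U$ the structure of an irreducible generator, so its stationary vector is unique; passing to the limit in $\vnu_\lambda(P_{\lambda,0}+P_{\lambda,1}\Psi_\lambda) = \vnu_\lambda$ yields $\vnu\Theta U = \vzero$, $\vnu\vone = 1$, which is (\ref{e:nust}) (continuity of the stationary vector at the irreducible limit generator being the point to justify). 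Substituting into $\vrho_\lambda = c\,\vnu_\lambda(I - P_{\lambda,1}\Psi_\lambda(q))$ and letting the normalisation constant absorb the factor $\sqrt\lambda$, I obtain $\vrho = c_1\vnu(qA^{-1} - \Theta U(q))$, which is (\ref{e:rhonust}); right-multiplying by $(qA^{-1} - \Theta U(q))^{-1}\Theta U$ and using $\vnu\Theta U = \vzero$ gives (\ref{e:rhost}). For $q>0$ the matrix $qA^{-1} - \Theta U(q)$ is invertible because $-\Theta U(q)$ is a nonsingular M-matrix (its eigenvalues have strictly positive real part, $U(q)$ being a strict sub-generator) to which the nonnegative diagonal $qA^{-1}$ is added, so the inverses above are legitimate.
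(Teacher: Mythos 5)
Your proposal is correct and takes essentially the same route as the paper: both arguments start from Lemma~\ref{t:Phi}, use the expansions $P_{\lambda,0} = \frac{q}{\sqrt\lambda}A^{-1} + O(1/\lambda)$, $P_{\lambda,1} = I - \frac{q}{\sqrt\lambda}A^{-1} + O(1/\lambda)$ and $\Psi_\lambda(q) = I + \frac{1}{\sqrt\lambda}\Theta U(q) + O(1/\lambda)$, and then pass to the limit $\lambda \to \infty$. The only cosmetic differences are that the paper obtains the expansion of $\Psi_\lambda(q)$ by adapting \cite[Lemma 3.4]{ln13} with $Q$ replaced by $Q-qI$ (your suggested fallback, rather than your direct Riccati ansatz, which does check out), and that it computes the limit $\Phi = I + (qA^{-1}-\Theta U(q))^{-1}\Theta U$ of $\Phi_\lambda$ and gets (\ref{e:rhost}) first, calling (\ref{e:rhonust})--(\ref{e:nust}) immediate, whereas you derive (\ref{e:nust}) and (\ref{e:rhonust}) first and then (\ref{e:rhost}).
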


\begin{proof}
The matrix $\Upsilon_\lambda$ defined in (\ref{e:Ul}) does not depend
on the behaviour of the fluid queue at level 0, so that
Lemma~\ref{t:Phi} applies and the stationary probability vector
$\vrho_\lambda$ of $\bar\varphi_\lambda(t)$ is
given by (\ref{e:rnlambda}).  

The matrix $P_{\lambda,0}$ defined in (\ref{e:pzero}) is given here by
\begin{align}
   \nonumber
 P_{\lambda,0}  & =  \int_0^\infty q e^{-qu} e^{A(Q-\lambda I)
   u/\sqrt\lambda} \, \ud u
\\
   \nonumber
   & = q (\sqrt\lambda A + qI - (1/\sqrt\lambda) A Q)^{-1}  
 \\
   \nonumber
   & = \frac{1}{\sqrt\lambda} q A^{-1}  (I + \frac{1}{\sqrt\lambda} q A^{-1}
 - \frac{1}{\lambda} A Q A^{-1})^{-1}  
 \\
    \label{e:p0st}
   &=  \frac{1}{\sqrt\lambda} q A^{-1} + O(1/\lambda)
 \end{align}
and similarly
\begin{align}
   \nonumber
 P_{\lambda,1}  & =  \int_0^\infty  e^{-qu} e^{A(Q-\lambda I)
   u/\sqrt\lambda}    \sqrt\lambda A \, \ud u
\\
   \nonumber
   & = \sqrt\lambda (\sqrt\lambda A + qI - (1/\sqrt\lambda) A Q)^{-1}
   A
 \\
    \label{e:p1st}
   &=  I - \frac{1}{\sqrt\lambda} q A^{-1} + O(1/\lambda)
 \end{align}
We repeat the proof
of \cite[Lemma 3.4]{ln13}, replacing $Q$ by $Q-q I$, and obtain
that
\begin{equation}
   \label{e:psilambda}
\Psi_\lambda(q) = I + \frac{1}{\sqrt\lambda} \Theta U(q) +
O(1/\lambda)
\end{equation}
for $q \geq 0$, where $U(q)$ is as stated in the lemma.
Altogether, the transition matrix is
\begin{align}
   \nonumber
\Phi_\lambda & = [I -(I- \frac{1}{\sqrt\lambda} q A^{-1})(I + \frac{1}{\sqrt\lambda} \Theta U(q))]^{-1}
\\  
   \nonumber 
 & \qquad
\times  [\frac{1}{\sqrt\lambda} q A^{-1} +(I - \frac{1}{\sqrt\lambda}
 q A^{-1}) \frac{1}{\sqrt\lambda} \Theta (U - U(q))]
 + O(1/\lambda)
\\ @
   \label{e:phil}
& =  I + (q A^{-1} - \Theta U(q))^{-1} \Theta U +  O(1/\sqrt\lambda),
\end{align}
which converges to the stochastic matrix
\[
\Phi = I + (q A^{-1} - \Theta U(q))^{-1} \Theta U
\]
as $\lambda \rightarrow \infty$.   The matrices $\Phi$ and
$\Phi_\lambda$ are irreducible and so the stationary probability
vector $\vrho_\lambda$ of $\Phi_\lambda$ converges to the stationary
probability vector $\vrho$ of $\Phi$, from which (\ref{e:rhost}) follows.

The remainder of the proof is immediate --- observe that
(\ref{e:nust}) is meaningful as $U$ is an irreducible generator.
\end{proof}

\begin{rem} \em
The matrix $U(q)$ defined in Lemma \ref{t:stickyboundary} has the
following physical interpretation: define $t_x = \inf\{t: X(t) \geq
x\}$.  we have 
\[
(e^{U(q)x})_{ij} = \P[t_x < E_q, \varphi(t_x) = j  |  \varphi(0)=i]
\]
where $E_q$ is an exponentially distributed random variable with
parameter $q$.
In other words, $U(q)$ is the generator of the Markov process
$\{\varphi(t_x)\}$ if the MMBM is killed at the exponential time
$E_q$.  Equation (\ref{e:quadraticU}) is a particular case of
\cite[Eqn (2.2)]{ivano10}.
\end{rem}

\begin{lem}
   \label{t:stickyTime}
   The expected time spent by the process $\{Y_\lambda(t), \bar\varphi_\lambda(t)\}$
   in the closed interval $[0,x]$ between regeneration points
   converges, as $\lambda \rightarrow \infty$, to
\begin{equation}
   \label{e:bigm}
M(x)   = (q A^{-1} -\Theta U(q))^{-1} \, (A^{-1} +2 
(-K)^{-1} (I-e^{K x} )\Theta^{-1})   
\end{equation}
for $x \geq 0$,  where 
\begin{equation}
   \label{e:KnU}
K = \Theta U \Theta^{-1} + 2 \Theta^{-2} D.
\end{equation}
The expected inter-regeneration
  interval is 
\begin{equation}
   \label{e:mgras}
 \vect m   =   (q A^{-1} -\Theta U(q))^{-1} \, (A^{-1} +2  (-K)^{-1}
 \Theta^{-1})\vone. 
\end{equation}
\end{lem}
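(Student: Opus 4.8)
The plan is to imitate the renewal argument behind Lemma~\ref{t:Phi}: first obtain a closed form for the occupation matrix $M_\lambda(x)$ of the approximating fluid queue $\{Y_\lambda(t),\bar\varphi_\lambda(t)\}$, and then let $\lambda\to\infty$ using the expansions (\ref{e:p0st}), (\ref{e:p1st}) and (\ref{e:psilambda}). Splitting an inter-regeneration interval into its successive sojourns at level $0$ and its excursions above $0$, and conditioning on the first sojourn exactly as in (\ref{e:phi1}), I would obtain
\[
M_\lambda(x) = W_\lambda + P_{\lambda,1}E_\lambda(x) + P_{\lambda,1}\Psi_\lambda(q)\,M_\lambda(x),
\]
where $W_\lambda$ is the matrix of expected time spent at level $0$ during one sojourn, before the fluid leaves $0$ or the timer goes off, and $E_\lambda(x)$ is the matrix of expected time spent in $(0,x]$ during one excursion, indexed by the phase at the instant the fluid starts to increase (the timer does not affect the excursion, only whether the interval continues). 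Because $P_{\lambda,1}\Psi_\lambda(q)$ is strictly sub-stochastic, as already noted in the proof of Lemma~\ref{t:Phi}, this solves to
\[
M_\lambda(x) = (I-P_{\lambda,1}\Psi_\lambda(q))^{-1}\,(W_\lambda + P_{\lambda,1}E_\lambda(x)).
\]

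Next I would evaluate the two blocks. The level-$0$ block is the occupation-time matrix of the phase while the fluid is stuck at $0$ under the rates (\ref{e:q0}); the same computation that gives (\ref{e:p0st}) yields $W_\lambda = q^{-1}P_{\lambda,0} = (\sqrt\lambda A + qI - \tfrac1{\sqrt\lambda}AQ)^{-1} = \tfrac1{\sqrt\lambda}A^{-1} + O(1/\lambda)$. From (\ref{e:p1st}) and (\ref{e:psilambda}), exactly as in the derivation of (\ref{e:phil}), one has $I - P_{\lambda,1}\Psi_\lambda(q) = \tfrac1{\sqrt\lambda}(qA^{-1}-\Theta U(q)) + O(1/\lambda)$, hence $(I-P_{\lambda,1}\Psi_\lambda(q))^{-1} = \sqrt\lambda\,(qA^{-1}-\Theta U(q))^{-1} + O(1)$. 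Multiplying, the level-$0$ block contributes in the limit precisely the term $(qA^{-1}-\Theta U(q))^{-1}A^{-1}$ of (\ref{e:bigm}).

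The heart of the proof is the excursion block $E_\lambda(x) = \int_0^x d_\lambda(l)\,\ud l$, where $d_\lambda(l)$ is the expected time density at level $l$, summed over $\kappa_\lambda$, accrued during one excursion of $\{Z_\lambda,\kappa_\lambda,\varphi_\lambda\}$ away from $0$. Standard fluid theory gives $d_\lambda$ as the decaying solution of the balance equation $d_\lambda'(l)C^*(\lambda) = d_\lambda(l)Q^*_\lambda$ on $(0,\infty)$, with boundary value at $0^+$ equal to the rate factor $(D+\sqrt\lambda\Theta)^{-1} + \Psi_\lambda(\sqrt\lambda\Theta - D)^{-1}$, the two summands accounting respectively for upward and, through the return matrix $\Psi_\lambda=\Psi_\lambda(0)$, downward crossings. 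Feeding in (\ref{e:psilambda}) at $q=0$, the $D$-terms of the two expansions cancel at order $1/\lambda$ and $\sqrt\lambda\big((D+\sqrt\lambda\Theta)^{-1} + \Psi_\lambda(\sqrt\lambda\Theta-D)^{-1}\big) \to 2\Theta^{-1}$, which is the source of the factor $2$ and of $\Theta^{-1}$. The level dependence of $\sqrt\lambda\,d_\lambda(l)$ converges, by the weak convergence of $\{Z_\lambda,\varphi_\lambda\}$ to $\{Z,\varphi\}$ from \cite{ln13}, to the occupation density of the reflected MMBM, whose decay is governed by the stationary-density matrix $K$ of (\ref{e:KnU}), the latter identification resting on the quadratic (\ref{e:quadraticU}) for $U$. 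Altogether $\sqrt\lambda\,E_\lambda(x) \to 2(-K)^{-1}(I-e^{Kx})\Theta^{-1}$, and this block contributes $(qA^{-1}-\Theta U(q))^{-1}\,2(-K)^{-1}(I-e^{Kx})\Theta^{-1}$ to the limit.

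I expect this third step to be the main obstacle, on two counts. First, one must show that the limiting decay rate is the density matrix $K$ and not the first-passage generator $U$: the $2m$-dimensional fluid generator carries $m$ fast modes, which must be shown to leave no trace on the occupation of any fixed level as $\lambda\to\infty$, together with $m$ slow modes that reproduce $e^{Kl}$, and the algebraic identity (\ref{e:KnU}) between $K$ and $U$ is exactly what reconciles the fluid computation with its limit. Second, one must justify interchanging the limit with the integral over $l$, which calls for a uniform (e.g. dominated) control of the densities $\sqrt\lambda\,d_\lambda(l)$, available from the exponential decay of $d_\lambda$ together with the tightness already used in Theorem~\ref{t:laztflip-flop}. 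Summing the two blocks gives (\ref{e:bigm}). Finally, since every eigenvalue of $K$ has strictly negative real part, $e^{Kx}\to 0$ as $x\to\infty$ and $\int_0^\infty e^{Kl}\,\ud l = (-K)^{-1}$, so $\vm = M(\infty)\vone$ is finite and equals (\ref{e:mgras}).
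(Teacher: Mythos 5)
Your proposal follows essentially the same skeleton as the paper's proof. The renewal decomposition is identical: the paper writes $M_\lambda(x) = (\sqrt\lambda A + qI - (1/\sqrt\lambda)AQ)^{-1} + P_{\lambda,1}M_f(x) + P_{\lambda,1}\Psi_\lambda(q)M_\lambda(x)$, which is your equation with $W_\lambda = q^{-1}P_{\lambda,0}$ and with your $E_\lambda(x)$ called $M_f(x)$; it then inverts $I-P_{\lambda,1}\Psi_\lambda(q)$ and passes to the limit using (\ref{e:p0st}), (\ref{e:p1st}), (\ref{e:psilambda}), exactly as you do. Your limiting prefactor $\sqrt\lambda\,(qA^{-1}-\Theta U(q))^{-1}$ is the correct one and is what (\ref{e:bigm}) requires (the paper's intermediate display at this point shows $\sqrt\lambda(-U(q))^{-1}\Theta^{-1}$, an apparent slip carried over from the non-sticky case of Section~\ref{s:regenerative}, but its final formula agrees with yours). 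The one genuine divergence is the excursion block: the paper does not re-derive the expansion $M_f(x) = \tfrac{2}{\sqrt\lambda}(-K)^{-1}(I-e^{Kx})\Theta^{-1} + O(1/\lambda)$ but imports it wholesale from \cite[Lemma 3.6]{ln13} and \cite[Theorem 3.7]{ln15a}, whereas you sketch a first-principles derivation through the occupation density $d_\lambda(l) = e^{K_\lambda l}\bigl(C_\u^{-1} + \Psi_\lambda |C_\d|^{-1}\bigr)$ of a fluid excursion. Your sketch is consistent with those results --- the boundary factor does tend to $2\Theta^{-1}$ after the $D$-terms cancel, and the fluid matrix $K_\lambda = C_\u^{-1}(Q-\lambda I) + \lambda\Psi_\lambda|C_\d|^{-1}$ does converge to the $K$ of (\ref{e:KnU}) --- but the uniform-in-$l$ control and the identification of the decay matrix that you flag as the ``main obstacle'' are precisely the content of the cited lemmas, which is why the paper can dispose of this step in one line. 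In short: same decomposition and same limiting argument; the hardest ingredient is obtained by citation in the paper and by a plausible but admittedly incomplete direct argument in your proposal, so to make your proof self-contained you would either have to complete that convergence analysis or, as the authors do, invoke the earlier results.
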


\begin{proof}
  We denote by $M_\lambda(x)$ the expected time spent by the fluid
  queue in $[0,x]$ and we examine $M_\lambda(0)$ first.  After a
  regeneration, the process remains at level 0 either until
  the timer expires, or until  $\bar\kappa_\lambda$ switches from 2 to 1.  If
  the timer expires first, the sojourn time at 0 is over,
  otherwise, the fluid begins to grow and we wait for the level to
  return to 0.  If the return to 0 happens after the timer has expired, the
  sojourn time at 0 is over, otherwise an additional interval at 0
  begins.  Thus,
\begin{align*}
M_\lambda (0) & = (\sqrt\lambda A + q I - (1/\sqrt\lambda) AQ)^{-1} 
   + P_{\lambda,1} \Psi _\lambda  (q) M_\lambda (0) \\
  & = (I - P_{\lambda,1} \Psi _\lambda (q))^{-1} 
(\sqrt\lambda A + q I - (1/\sqrt\lambda) A Q)^{-1}
\\
 & = (\frac{1}{\sqrt\lambda} (q A^{-1} -\Theta U(q))
      + O(1/\lambda))^{-1}
(\frac{1}{\sqrt\lambda} A^{-1} +O(1/\lambda))
\end{align*}
by (\ref{e:p1st}, \ref{e:psilambda}).  This converges to $(qA^{-1}
-\Theta U(q))^{-1} A^{-1}$ 
as $\lambda \rightarrow \infty$.   

For $x > 0$, we have by a similar decomposition
\begin{equation}
   \label{e:Mla}
M_\lambda(x)  = (\sqrt\lambda A + q I - (1/\sqrt\lambda) AQ)^{-1} 
  + P_{\lambda,1} M_f(x)
  + P_{\lambda,1} \Psi_\lambda(q) M_\lambda(x),
\end{equation}
where $M_f(x)$ is the matrix of expected time spent in the semi-open
interval $(0,x]$ until the next return to 0, irrespective of the
timer being off or not at the time of return.   We rewrite (\ref{e:Mla}) as 
\begin{align}
   \nonumber
M_\lambda(x)  & = (I - P_{\lambda,1} \Psi_\lambda(q))^{-1}
 \,  ((\sqrt\lambda A + q I - (1/\sqrt\lambda) AQ)^{-1} 
        + P_{\lambda,1} M_f(x))
\\
   \label{e:Ml}
  & = M_\lambda(0) + (I - P_{\lambda,1} \Psi_\lambda(q))^{-1}P_{\lambda,1} M_f(x).
\end{align}
By (\ref{e:p1st}, \ref{e:psilambda}), we have
\[
(I - P_{\lambda,1} \Psi(q))^{-1} P_{\lambda,1}  = \sqrt\lambda
((-U(q))^{-1} \Theta^{-1} + O(1/\sqrt\lambda)
\]
and by \cite[Lemma 3.6]{ln13} and \cite[Theorem 3.7]{ln15a}, 
\[
M_f(x) = \frac{2}{\sqrt\lambda} (-K)^{-1}  (I - e^{Kx}) \Theta^{-1} + O(1/\lambda)
\]
where $K$ is given by (\ref{e:KnU}).   
Altogether, this shows that the limit of $M_\lambda(x)$ is given by
(\ref{e:bigm}).  
The proof of (\ref{e:mgras}) is immediate.
\end{proof}

We collect Lemmas \ref{t:stickyboundary} and \ref{t:stickyTime} in the
theorem below and obtain two formally different expressions for the
stationary distribution of the MMBM with sticky boundary.  The first
one directly follows from our regenerative process approach, the
second is independent of the parameter $q$.  In particular, one may
verify that (\ref{e:distnuB}) is identical to (\ref{e:bmsticky}) when
there is only one phase.

\begin{thm}
   \label{t:stickydist}
The stationary probability distribution function of the MMBM with sticky
boundary at zero is given by
\begin{equation}
   \label{e:distrhoB}
\vG(x) = \gamma_\rho \, \vect\rho
( q A^{-1}  -\Theta U(q) )^{-1} 
(A^{-1} + 2 (- K)^{-1} (I-e^{K x}) \Theta^{-1}),
\end{equation}
where $\vect\rho$ is the solution of the system (\ref{e:rhost}) and
$\gamma_\rho = 2(\vect\rho \vm)^{-1}$ is the normalization constant.

The distribution is also given by 
\begin{equation}
   \label{e:distnuB}
\vG (x) = \gamma_\nu \vect\nu(A^{-1} + 2 (- K)^{-1} (I-e^{K x}) \Theta^{-1}),
\end{equation}
independently of $q$, where $\vect\nu$ is the solution of the
system (\ref{e:nust}) and 
$\gamma_\nu = (\vnu (A^{-1} + 2 (- \Theta K)^{-1} )\vone)^{-1}$ is
the normalizing constant.  
\cqfd
\end{thm}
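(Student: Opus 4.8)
The plan is to read the theorem as the assembly of Lemmas~\ref{t:stickyboundary} and~\ref{t:stickyTime} through the general Markov-regenerative identity (\ref{e:G}). Since the excursion matrix $\Upsilon_\lambda$ of (\ref{e:Ul}) is insensitive to the boundary rule, each approximant $\{Y_\lambda(t),\bar\varphi_\lambda(t)\}$ is a bona fide regenerative process on the epochs $\{\theta_n\}$ of (\ref{e:theta}), so $\vG_\lambda(x)=(\vrho_\lambda\vm_\lambda)^{-1}\vrho_\lambda M_\lambda(x)$ for every $\lambda$. First I would pass to the limit $\lambda\to\infty$: combining the weak convergence of $\{Y_\lambda\}$ to $\{Y\}$ recorded in Section~\ref{s:sticky-mmbms} with $\vrho_\lambda\to\vrho$ (Lemma~\ref{t:stickyboundary}) and $M_\lambda(x)\to M(x)$ (Lemma~\ref{t:stickyTime}) gives, for the limiting sticky MMBM,
\[
\vG(x)=(\vrho\vm)^{-1}\,\vrho\,M(x),
\]
with $\vm=M(\infty)\vone$ as in (\ref{e:mgras}). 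Equivalently, one applies (\ref{e:G}) of \c{C}inlar directly to $Y$, treating it as its own regenerative process and reading off $\vrho$ and $M$ as the limits already computed in the two lemmas.

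Next I would substitute the closed form (\ref{e:bigm}) for $M(x)$, which immediately produces (\ref{e:distrhoB}); here $\gamma_\rho$ is the normalising constant fixed by $\vG(\infty)\vone=1$, well defined because $e^{Kx}\to\vzero$ as $x\to\infty$ (the eigenvalues of $K$ in (\ref{e:KnU}) have negative real parts under Assumption~\ref{a:drift}, so that $M(\infty)\vone=\vm$). To reach the second, $q$-free, form I would invoke the identity (\ref{e:rhonust}), $\vrho(qA^{-1}-\Theta U(q))^{-1}=c_1\vnu$, which collapses the entire $q$-dependent left factor of (\ref{e:distrhoB}) onto the vector $\vnu$ solving (\ref{e:nust}). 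Absorbing $c_1$ into the normaliser yields
\[
\vG(x)=\gamma_\nu\,\vnu\bigl(A^{-1}+2(-K)^{-1}(I-e^{Kx})\Theta^{-1}\bigr),
\]
and imposing $\vG(\infty)\vone=1$ with $e^{Kx}\to\vzero$ gives $\gamma_\nu=(\vnu(A^{-1}+2(-\Theta K)^{-1})\vone)^{-1}$, using $(-K)^{-1}\Theta^{-1}=(-\Theta K)^{-1}$. The decisive observation is that $\vnu$ and $K$ are built only from $U=U(0)$ and the data $D,\Theta,A$, none of which involves $q$, so (\ref{e:distnuB}) is manifestly independent of the regeneration rate even though $\vrho$, $M$, and $\gamma_\rho$ all depend on it.

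I expect the genuine difficulty to lie not in this algebra but in the limiting justification behind $\vG(x)=(\vrho\vm)^{-1}\vrho M(x)$: one must be sure that the stationary law of the weak limit $Y$ coincides with the limit of the stationary laws of the $Y_\lambda$, not merely that the trajectories converge, since process-level weak convergence does not by itself transfer stationary distributions. The clean route is the one noted above, applying the regenerative formula directly to $Y$ and importing $\vrho$ and $M$ as the already-established limits, which confines the delicate passage to the two lemmas. As a final consistency check I would specialise to $m=1$, where $U=0$, $\vnu=1$, and $K=2\mu/\sigma^2$; then (\ref{e:distnuB}) collapses to (\ref{e:bmsticky}) with $\omega=a\sigma$, and the term $\gamma_\nu\vnu A^{-1}$ correctly accounts for the atom of $\vG$ at the origin.
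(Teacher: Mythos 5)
Your algebraic assembly coincides with the paper's: apply the regenerative identity (\ref{e:G}) to each approximating queue, let $\lambda\to\infty$ using Lemmas~\ref{t:stickyboundary} and \ref{t:stickyTime}, substitute (\ref{e:bigm}), and collapse the $q$-dependence through (\ref{e:rhonust}) and (\ref{e:nust}); your $m=1$ reduction to (\ref{e:bmsticky}) with $\omega=a\sigma$ is also correct. You have moreover put your finger on exactly the right difficulty: weak (even process-level) convergence of $\{Y_\lambda,\bar\varphi_\lambda\}$ to $\{Y,\bar\varphi\}$ does not by itself imply that the stationary laws converge. The problem is your proposed way out. Applying \c{C}inlar's formula ``directly to $Y$'' and ``importing $\vrho$ and $M$ as the already-established limits'' is circular: Lemmas~\ref{t:stickyboundary} and \ref{t:stickyTime} compute limits of $\vrho_\lambda$ and $M_\lambda(x)$, which are quantities of the \emph{approximating fluid queues}, not of $Y$. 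To insert them into the regenerative formula for $Y$ you would have to prove that the phase distribution at the regeneration epochs of $Y$ and the expected-sojourn matrix of $Y$ equal these limits, i.e.\ that the hitting times $\theta_n$, the phases $\bar\varphi(\theta_n)$, and the occupation times between the $\theta_n$ pass to the limit along the convergence $Y_\lambda\Rightarrow Y$. First-passage functionals are not continuous on path space, so this is precisely the interchange-of-limits problem you set out to avoid, merely relocated; your ``clean route'' does not confine the delicacy to the two lemmas, it hides it.

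The paper closes this gap by a different, and essential, sequence of steps: (i) finite-dimensional convergence of $\{Y_\lambda,\bar\varphi_\lambda\}$ to $\{Y,\bar\varphi\}$, obtained from Theorem 2.7 of \cite{ln13} and the continuity of the clock change $\Gamma$; (ii) tightness of the family $\{Y_\lambda,\bar\varphi_\lambda\}$, proved by adapting Theorem 2.6 of \cite{ln13} with the help of Lemma~\ref{t:Phi}; and (iii) an adaptation of Theorem 3.1 of \cite{ln14} showing that the stationary distributions of the $Y_\lambda$ converge to the stationary distribution of $Y$. Only with (iii) in hand does one get $\vG(x)=\lim_\lambda(\vrho_\lambda\vm_\lambda)^{-1}\vrho_\lambda M_\lambda(x)$, after which the two lemmas yield (\ref{e:distrhoB}) and the algebra you describe yields (\ref{e:distnuB}). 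Your write-up needs steps (ii) and (iii) — or an equivalent justification that stationarity survives the limit — stated and proved (or invoked); as it stands, both of your routes leave the decisive limiting step unestablished.
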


\begin{proof}
The process $\{Y_\lambda(t), \bar\kappa_\lambda(t),
\bar\varphi_\lambda(t)\}$ has the same distribution as the process
$\{Z_\lambda(\Gamma_\lambda(t)), \kappa_\lambda(\Gamma_\lambda(t)),
\varphi_\lambda(\Gamma_\lambda(t))\}$.  By \cite[Theorem 2.7]{ln13},
$\{Z_\lambda(t), \varphi_\lambda(t)\}$ weakly converges to $\{Z(t),
\varphi(t)\}$ and so, by the continuity and convergence properties of
$\Gamma(t)$, we find that the finite-dimensional distributions of
$\{Y_\lambda(t), \bar\varphi_\lambda(t)\}$ converge to the finite-dimensional
distribution of $\{Y(t), \bar\varphi(t)\}$.

Furthermore, the family $\{Y_\lambda(t), \bar\varphi_\lambda(t)\}$ is tight.  To
see this, we adapt the proof of \cite[Theorem 2.6]{ln13} and use
Lemma~\ref{t:Phi}.

Finally, we adapt the proof of \cite[Theorem 3.1]{ln14} to conclude
that the stationary distribution of $\{Y_\lambda(t), \bar\varphi_\lambda(t)\}$
converge to the stationary distribution of $\{Y(t), \bar\varphi(t)\}$.
Together with Lemmas \ref{t:stickyboundary} and
\ref{t:stickyTime}, this completes the proof.
\\ \hspace*{1em}
\end{proof}

The presence of the factor $A^{-1}$ in the expression for $M(0)$ is easy to understand: the greater
$a_i$, the faster the process leaves level 0 and the smaller the
mass at zero for phase $i$.  The marginal distribution of the phase is
no longer equal to $\valpha$, as we show in corollary \ref{t:margsticky}
below; its proof is immediate and is omitted.

\begin{cor}
   \label{t:margsticky}
The marginal distribution  of the phase is 
\[
\vG(\infty) = \gamma_\nu
\vect\nu (A^{-1} +2 (-\Theta K)^{-1}).
\]
\cqfd
\end{cor}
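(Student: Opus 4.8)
The plan is to obtain the marginal phase law by simply sending $x\to\infty$ in the $q$-independent expression (\ref{e:distnuB}). Since $\vG(\cdot)$ is a bona fide (vector-valued) distribution function --- established in Theorem~\ref{t:stickydist} --- the limit $\vG(\infty)=\lim_{x\to\infty}\vG(x)$ exists and is the sought marginal distribution, so the entire task reduces to evaluating that limit in closed form.

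The only $x$-dependence in (\ref{e:distnuB}) sits in the matrix exponential $e^{Kx}$, so the one substantive step is to show that $e^{Kx}\to 0$ as $x\to\infty$. I would argue this from the stability of $K$: differentiating (\ref{e:distnuB}) and using $(-K)^{-1}(-K)=I$ gives the density $2\gamma_\nu\,\vnu\,e^{Kx}\Theta^{-1}$ on $(0,\infty)$, which must be integrable because $\vG$ is a probability distribution; this is possible only if every eigenvalue of $K$ has strictly negative real part, a fact guaranteed by Assumption~\ref{a:drift} and recorded in \cite{ln15a}. Granting $e^{Kx}\to 0$, the limit collapses to
\[
\vG(\infty)=\gamma_\nu\,\vnu\bigl(A^{-1}+2(-K)^{-1}\Theta^{-1}\bigr).
\]

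It then remains only to match this with the stated form, a one-line manipulation of inverses: since $(-K)^{-1}=-K^{-1}$ and $\Theta$ is invertible,
\[
(-K)^{-1}\Theta^{-1}=-K^{-1}\Theta^{-1}=-(\Theta K)^{-1}=(-\Theta K)^{-1},
\]
so that $\vG(\infty)=\gamma_\nu\,\vnu\bigl(A^{-1}+2(-\Theta K)^{-1}\bigr)$ as claimed. As a consistency check, this is precisely the quantity whose reciprocal defines $\gamma_\nu$ in Theorem~\ref{t:stickydist}, so $\vG(\infty)\vone=1$ and the result is a genuine probability vector. The only (mild) obstacle is the stability claim $e^{Kx}\to 0$; everything else is the substitution $x\to\infty$ and cosmetic rewriting, which is why the corollary is indeed immediate.
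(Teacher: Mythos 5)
Your proof is correct and is precisely the ``immediate'' argument the paper has in mind when it omits the proof: send $x\to\infty$ in (\ref{e:distnuB}), use $e^{Kx}\to 0$, and rewrite $(-K)^{-1}\Theta^{-1}=(-\Theta K)^{-1}$. One small blemish: integrability of the density $2\gamma_\nu\,\vnu\, e^{Kx}\Theta^{-1}$ does not by itself force \emph{every} eigenvalue of $K$ into the open left half-plane (it only constrains the projection of $\vnu$ onto the unstable modes), so the stability of $K$ should rest, as you also indicate, on Assumption~\ref{a:drift} and the result recorded in \cite{ln15a}.
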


\section{Markov-regenerative MMBM}
\label{s:regenerative}

We revisit here the standard  MMBM $\{Z(t), \varphi(t)\}$  defined in
(\ref{e:Z}) and follow our regenerative process approach to determine
its stationary distribution.  Although expressions are known already
for the stationary distribution (Rogers~\cite{roger94},
Asmussen~\cite{asmus95b}, Latouche and Nguyen~\cite{ln13}), this new
analysis is of independent interest because it is one of the first to
analyze the MMBM as a regenerative process.  Harrison~\cite[Chapter 5,
Section 4]{harri90} does treat the regulated Brownian motion with two
boundaries as a regenerative process, but we take a different path.

We follow the same steps as in Section~\ref{s:sticky-mmbms} and, to
avoid confusion with the results there, we use the mark ``$*$''
in the present section.  Thus, $\vrho^*_\lambda$ and
$M^*_\lambda(x)$ represent, respectively, the stationary distribution of
the phase at regeneration epochs, and the expected time in $[0,x]$
between regenerations, for the flip-flop process $\{Z_\lambda(t),
\kappa_\lambda(t), \varphi_\lambda(t)\}$ with generator (\ref{e:qstar}).

\begin{lem}
   \label{t:boundary}
As $\lambda \rightarrow \infty$, $\vect\rho^* _\lambda$ converges to
$\vect\rho^*$ such that
\begin{equation}
   \label{e:rho}
\vect\rho^* (-U(q))^{-1} U = \vzero,  \qquad \vect\rho^*
  \vone = 1,
\end{equation}
where $U(q)$ is defined in Lemma~\ref{t:stickyboundary}.
In addition, 
\begin{equation}
   \label{e:rhoNnu}
\vect\rho^* (-U(q))^{-1}  = c_2 \vect\nu \Theta,
\end{equation}
for some scalar $c_2$, where  $\vnu$  is characterized by (\ref{e:nust}).
\end{lem}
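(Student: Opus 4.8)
The plan is to follow the proof of Lemma~\ref{t:stickyboundary} almost verbatim, the only change being the level-0 transition matrices. Since the first-return matrix $\Upsilon_\lambda$ of (\ref{e:Ul}) depends only on the dynamics away from the boundary, Lemma~\ref{t:Phi} still applies to the flip-flop process $\{Z_\lambda(t),\kappa_\lambda(t),\varphi_\lambda(t)\}$, so that $\vrho^*_\lambda$ is given by (\ref{e:rnlambda}) and the expansion (\ref{e:psilambda}), namely $\Psi_\lambda(q) = I + \tfrac{1}{\sqrt\lambda}\Theta U(q) + O(1/\lambda)$, carries over unchanged. It therefore suffices to recompute $P^*_{\lambda,0}$ and $P^*_{\lambda,1}$ for the standard regulated MMBM and to pass to the limit in (\ref{e:phi}).

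First I would determine the level-0 matrices. For the unmodified process the phase $\varphi_\lambda$ evolves under the original generator $Q$ while the fluid sits at level $0$ in $\kappa_\lambda = 2$, with $\kappa_\lambda$ jumping to $1$ at rate $\lambda$; racing the $\kappa$-clock against the timer of rate $q$ gives
\[
P^*_{\lambda,0} = q(\lambda I + q I - Q)^{-1} = \tfrac{q}{\lambda} I + O(1/\lambda^2), \qquad P^*_{\lambda,1} = \lambda(\lambda I + q I - Q)^{-1} = I + \tfrac{1}{\lambda}(Q - q I) + O(1/\lambda^2).
\]
The decisive structural difference from the sticky case is that here the flip-flop rate at the boundary is $\lambda$ rather than $a_i\sqrt\lambda$, so the timer contributes only at order $1/\lambda$; consequently the $O(1/\sqrt\lambda)$ term of $P^*_{\lambda,1}-I$ vanishes, unlike the $-\tfrac{1}{\sqrt\lambda}qA^{-1}$ term present in (\ref{e:p1st}).

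Next I would substitute into (\ref{e:phi}). Combining the expansions above with (\ref{e:psilambda}) one finds $I - P^*_{\lambda,1}\Psi_\lambda(q) = -\tfrac{1}{\sqrt\lambda}\Theta U(q) + O(1/\lambda)$ and $I - P^*_{\lambda,0} - P^*_{\lambda,1}\Psi_\lambda = -\tfrac{1}{\sqrt\lambda}\Theta U + O(1/\lambda)$, so the $\sqrt\lambda$ arising from inverting the first factor cancels the $1/\sqrt\lambda$ of the second and
\[
\Phi^*_\lambda = I - (U(q))^{-1}U + O(1/\sqrt\lambda) \;\longrightarrow\; \Phi^* = I + (-U(q))^{-1}U .
\]
Invertibility of $U(q)$ for $q>0$ (Lemma~\ref{t:stickyboundary}) legitimises the leading inverse, while $U\vone=\vzero$ confirms $\Phi^*\vone=\vone$. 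Since $\Phi^*_\lambda$ and $\Phi^*$ are irreducible, $\vrho^*_\lambda \to \vrho^*$, and $\vrho^*\Phi^* = \vrho^*$ is exactly (\ref{e:rho}). The main delicacy is the bookkeeping of the two differing asymptotic scales---the $O(1/\lambda)$ boundary corrections against the $O(1/\sqrt\lambda)$ return corrections---and verifying that the subleading terms genuinely drop out, so that the cancellation of $\sqrt\lambda$ factors yields precisely $(U(q))^{-1}U$ and nothing more.

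Finally, for (\ref{e:rhoNnu}) I would argue by uniqueness of the null vector. Put $\vx = \vrho^*(-U(q))^{-1}$; by (\ref{e:rho}) we have $\vx U = \vzero$, while (\ref{e:nust}) gives $\vnu\Theta U = \vzero$. Thus $\vx$ and $\vnu\Theta$ are both left null vectors of the irreducible generator $U$, whose left null space is one-dimensional; as both are non-zero, because $\vrho^*$ and $\vnu$ are probability vectors and $(-U(q))^{-1}$ and $\Theta$ are invertible, they are proportional, which is (\ref{e:rhoNnu}).
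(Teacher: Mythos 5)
Your proposal is correct and follows essentially the same route as the paper: invoke Lemma~\ref{t:Phi} (valid since $\Upsilon_\lambda$ is boundary-independent), replace the level-0 matrices by $P_{\lambda,0} = q(\lambda I + qI - Q)^{-1}$ and $P_{\lambda,1} = \lambda(\lambda I + qI - Q)^{-1}$, combine with the expansion (\ref{e:psilambda}) to get $\Phi_\lambda = I - U(q)^{-1}U + O(1/\sqrt\lambda)$, and conclude by irreducibility and the one-dimensionality of the left null space of $U$. Your write-up merely makes explicit the asymptotic bookkeeping and the proportionality argument that the paper dismisses as ``straightforward.''
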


\begin{proof}
  We start from Lemma \ref{t:Phi} and we repeat the argument in the proof of Lemma
  \ref{t:stickyboundary}, the only difference being that the matrices
  $P_{\lambda,0}$ and $P_{\lambda,1}$ are given here by
\begin{align}
   \label{e:p0}
P_{\lambda,0} & = q (\lambda I + q I - Q)^{-1} = O(1/\lambda),  \\
   \label{e:p1}
P_{\lambda,1} & = \lambda (\lambda I + q I - Q)^{-1} = I + O(1/\lambda),
\end{align}
so that the matrix $\Phi_\lambda$ from (\ref{e:phi}) is
\[
\Phi_\lambda = I - U(q)^{-1} U + O(1/\sqrt\lambda)
\]
and converge to $\Phi = I - U(q)^{-1} U$.  The remainder of the proof
is straightforward.
\end{proof}

Our next step is to determine the expected time spent in $[0,x]$
during a regeneration interval, and then collect the pieces in
Theorem~\ref{t:distribution}.

\begin{lem}
   \label{t:expectedTime}
The expected time spent by the MMBM in level 0 between regeneration
points is 0.  The time spent in $[0,x]$ (or equivalently in $(0,x]$) is
\begin{equation}
   \label{e:Mofx}
M^*(x)   = 2 (-U(q))^{-1} \Theta^{-1} (-K)^{-1} (I-e^{K x}
)\Theta^{-1}   \qquad \mbox{for $x \geq 0$.}
\end{equation}
The expected length of an interval between regenerations is 
\begin{equation}
   \label{e:m}
 \vect m^*  = 2 (-U(q))^{-1} \Theta^{-1}  (-K)^{-1} \Theta^{-1}\vone.
\end{equation}
\end{lem}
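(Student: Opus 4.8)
The plan is to follow the proof of Lemma~\ref{t:stickyTime} almost verbatim, performing a renewal (first-step) decomposition of the expected time in $[0,x]$ over one inter-regeneration interval of the flip-flop fluid queue $\{Z_\lambda(t),\kappa_\lambda(t),\varphi_\lambda(t)\}$ and then letting $\lambda\to\infty$. Only two things change relative to the sticky case: the matrices $P_{\lambda,0}$ and $P_{\lambda,1}$ are now given by (\ref{e:p0}) and (\ref{e:p1}), and the expected duration of a single continuous stay at level $0$ is $(\lambda I+qI-Q)^{-1}$, the exit occurring through the $\kappa$-switch at rate $\lambda$ or the timer at rate $q$, with phase changes governed by $Q$. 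This last matrix is $O(1/\lambda)$, whereas its sticky counterpart was only $O(1/\sqrt\lambda)$, and it is precisely this order gap that will make the time at level $0$ vanish in the limit.

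First I would compute $M^*_\lambda(0)$. The same renewal argument as in Lemma~\ref{t:stickyTime} gives $M^*_\lambda(0)=(I-P_{\lambda,1}\Psi_\lambda(q))^{-1}(\lambda I+qI-Q)^{-1}$. Combining $P_{\lambda,1}=I+O(1/\lambda)$ from (\ref{e:p1}) with the expansion (\ref{e:psilambda}), $\Psi_\lambda(q)=I+\frac{1}{\sqrt\lambda}\Theta U(q)+O(1/\lambda)$, one gets $I-P_{\lambda,1}\Psi_\lambda(q)=-\frac{1}{\sqrt\lambda}\Theta U(q)+O(1/\lambda)$ and hence $(I-P_{\lambda,1}\Psi_\lambda(q))^{-1}=\sqrt\lambda\,(-U(q))^{-1}\Theta^{-1}+O(1)$. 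Multiplying by $(\lambda I+qI-Q)^{-1}=\frac{1}{\lambda}I+O(1/\lambda^2)$ leaves a quantity of order $1/\sqrt\lambda$, so $M^*_\lambda(0)\to 0$. This is the step that genuinely separates the standard MMBM from the sticky one, and it proves that the expected time at level $0$ is $0$; because that contribution disappears, the expected times in $[0,x]$ and in $(0,x]$ coincide in the limit.

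For $x>0$ the identical decomposition (compare (\ref{e:Mla})) yields the analogue of (\ref{e:Ml}), namely $M^*_\lambda(x)=M^*_\lambda(0)+(I-P_{\lambda,1}\Psi_\lambda(q))^{-1}P_{\lambda,1}M_f(x)$, with $M_f(x)$ the expected time in $(0,x]$ during one excursion away from the boundary. From the computation above, $(I-P_{\lambda,1}\Psi_\lambda(q))^{-1}P_{\lambda,1}=\sqrt\lambda\,(-U(q))^{-1}\Theta^{-1}+O(1)$, while \cite[Lemma 3.6]{ln13} and \cite[Theorem 3.7]{ln15a} give $M_f(x)=\frac{2}{\sqrt\lambda}(-K)^{-1}(I-e^{Kx})\Theta^{-1}+O(1/\lambda)$ with $K$ as in (\ref{e:KnU}). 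The factors $\sqrt\lambda$ and $1/\sqrt\lambda$ cancel and, since $M^*_\lambda(0)\to 0$, the limit is exactly $M^*(x)=2(-U(q))^{-1}\Theta^{-1}(-K)^{-1}(I-e^{Kx})\Theta^{-1}$, which is (\ref{e:Mofx}).

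Finally, (\ref{e:m}) would follow by letting $x\to\infty$ in $M^*(x)\vone$: since $e^{Kx}\to 0$ (the same property of $K$ used to pass from (\ref{e:bigm}) to (\ref{e:mgras}) in the sticky case), one obtains $\vm^*=M^*(\infty)\vone=2(-U(q))^{-1}\Theta^{-1}(-K)^{-1}\Theta^{-1}\vone$. The only delicate point in the whole argument is the order-counting that forces $M^*_\lambda(0)$ to vanish; the remainder is a direct transcription of the sticky proof with the simpler resolvents (\ref{e:p0}) and (\ref{e:p1}) in place of (\ref{e:p0st}) and (\ref{e:p1st}).
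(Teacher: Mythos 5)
Your proposal is correct and takes essentially the same route as the paper's own proof: the identical renewal decomposition at level $0$, the expansions $P_{\lambda,1}=I+O(1/\lambda)$, $\Psi_\lambda(q)=I+\frac{1}{\sqrt\lambda}\Theta U(q)+O(1/\lambda)$ and $M_f(x)=\frac{2}{\sqrt\lambda}(-K)^{-1}(I-e^{Kx})\Theta^{-1}+O(1/\lambda)$, leading to $M^*_\lambda(0)=\frac{1}{\sqrt\lambda}(-U(q))^{-1}\Theta^{-1}+O(1/\lambda)\to 0$ and to the cancellation of the $\sqrt\lambda$ factors in the term for $x>0$. Your write-up in fact spells out the order-counting and the passage $x\to\infty$ (using $e^{Kx}\to 0$) for $\vect m^*$, which the paper compresses into ``simple manipulations'' and ``immediate''.
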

\begin{proof}
We follow the same steps as in the proof of Lemma
\ref{t:stickyTime}.   The expected time at level zero is 
\begin{align}
   \nonumber
M^* _\lambda (0) & = (\lambda I + q I - Q)^{-1} + P_{\lambda,1} \Psi _\lambda
(q) M^*_\lambda (0) \\
   \nonumber
  & = (I - P_{\lambda,1} \Psi _\lambda (q))^{-1} (\lambda I + q I -
  Q)^{-1} 
\\
   \label{e:Mstar0}
 & = \frac{1}{\sqrt\lambda} (- U(q))^{-1} \Theta^{-1} + O(1/\lambda)
\end{align}
and so,  $\lim_{\lambda \rightarrow \infty}  M^* _\lambda (0) = 0$.
For strictly positive $x$, we have
\[
M^* _\lambda (x)  = M^*_\lambda(0) +  (I - P_{\lambda,1} \Psi(q))^{-1}   P_{\lambda,1} M^*_f(x)
\]
instead of (\ref{e:Ml}), and (\ref{e:Mofx}) follows after simple manipulations.  The proof of
(\ref{e:m}) is immediate.
\end{proof}

The next theorem directly follows from Lemmas \ref{t:boundary} and
\ref{t:expectedTime} and is given without proof.

\begin{thm}
   \label{t:distribution}
The stationary probability distribution function of the regulated MMBM is given by
\begin{equation}
   \label{e:distrho}
\vG^*(x) = \gamma^*_\rho \, \vect\rho^*
(-U(q))^{-1}  \Theta^{-1}
(-K)^{-1} (I-e^{K x}) \Theta^{-1},
\end{equation}
where $\vect\rho^*$ is the solution of $\vrho^* (-U(q))^{-1} U =
\vzero$, $\vrho^* \vone = 1$, and $\gamma^*_\rho = 2 (\vrho^* \vm^*)^{-1}$ is
the normalizing constant.

It is also given by 
\begin{equation}
   \label{e:distnu}
\vG^*(x) = \gamma^*_\nu \vect\nu(- K)^{-1} (I-e^{K x}) \Theta^{-1},
\end{equation}
independently of $q$, where $\vect\nu$ is the solution of
the system
$\vect\nu \Theta U = \vect 0$, $\vect\nu \vect 1 = 1$, and 
$\gamma^*_\nu= (\vnu (-K)^{-1} \Theta^{-1} \vone)^{-1}$.
\cqfd
\end{thm}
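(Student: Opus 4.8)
The plan is to combine the general Markov-regenerative formula (\ref{e:G}) with the explicit expressions furnished by Lemmas \ref{t:boundary} and \ref{t:expectedTime}, and then to exploit the relation (\ref{e:rhoNnu}) to pass to the $q$-free form. Since the theorem is asserted to follow directly from those two lemmas, the argument is essentially a sequence of substitutions, and I expect no serious obstacle; the only point requiring mild care is making the claimed $q$-independence transparent.

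First I would instantiate (\ref{e:G}) for the flip-flop process under consideration, writing $\vG^*(x) = (\vrho^* \vm^*)^{-1} \vrho^* M^*(x)$, where $\vrho^*$ is the limiting regeneration-phase distribution from Lemma~\ref{t:boundary} and where $M^*(x)$ and $\vm^*$ are the limiting expected-time quantities from Lemma~\ref{t:expectedTime}. Substituting $M^*(x) = 2(-U(q))^{-1}\Theta^{-1}(-K)^{-1}(I-e^{Kx})\Theta^{-1}$ into $\vrho^* M^*(x)$ and pulling the scalar prefactor into the normalizing constant $\gamma^*_\rho = 2(\vrho^* \vm^*)^{-1}$ yields exactly the right-hand side of (\ref{e:distrho}). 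This first expression is thus immediate once the lemmas are in hand.

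Next, to obtain the $q$-independent form (\ref{e:distnu}), I would invoke (\ref{e:rhoNnu}), namely $\vrho^*(-U(q))^{-1} = c_2 \vnu \Theta$. All the $q$-dependence in (\ref{e:distrho}) resides in the left factor $\vrho^*(-U(q))^{-1}$, so this relation collapses $\vrho^*(-U(q))^{-1}\Theta^{-1}$ to the constant multiple $c_2\vnu$, giving $\vG^*(x) \propto \vnu (-K)^{-1}(I-e^{Kx})\Theta^{-1}$, with $\vnu$ characterised by the $q$-free system (\ref{e:nust}). The scalar $c_2$ together with the earlier prefactor merges into a single normalizing constant $\gamma^*_\nu$. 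This is the step at which the independence of $q$ becomes evident: although $\vrho^*$ and $U(q)$ each depend on $q$, the contracted product $\vrho^*(-U(q))^{-1}$ does not.

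Finally, I would fix the normalizing constant by imposing $\vG^*(\infty)\vone = 1$. Using that $e^{Kx} \to 0$ as $x \to \infty$ (the eigenvalues of $K$ having negative real parts), the limit reduces to $\gamma^*_\nu \vnu (-K)^{-1}\Theta^{-1}\vone$, whence $\gamma^*_\nu = (\vnu(-K)^{-1}\Theta^{-1}\vone)^{-1}$, as stated. The well-posedness of the defining system for $\vnu$ is guaranteed because $U$ is an irreducible generator, so $\vnu\Theta U = \vzero$ admits a unique probability solution. No further difficulty arises, and collecting these substitutions completes the proof.
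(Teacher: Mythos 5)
Your proposal is correct and follows essentially the same route as the paper: the paper states that Theorem~\ref{t:distribution} ``directly follows from Lemmas~\ref{t:boundary} and \ref{t:expectedTime} and is given without proof,'' and your argument is exactly that direct combination --- instantiating (\ref{e:G}), substituting $M^*(x)$ and $\vm^*$, and collapsing the $q$-dependence via (\ref{e:rhoNnu}) before normalizing with $\vG^*(\infty)\vone=1$. The only minor imprecision is your remark that $\vrho^*(-U(q))^{-1}$ itself is $q$-independent --- by (\ref{e:rhoNnu}) it is only proportional to the $q$-free vector $\vnu\Theta$ with a possibly $q$-dependent scalar $c_2$ --- but you immediately absorb that scalar into $\gamma^*_\nu$, so nothing is lost.
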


The vector $\vnu$ is proportional to the vector $\vect\zeta_1$ defined
in \cite[Theorem 3.7]{ln13} and so the expression (\ref{e:distnu}) is nearly identical to the one given there.

\section{Observations}
\label{s:observation}

The equations (\ref{e:distnuB}, \ref{e:distnu}) have the advantage
over (\ref{e:distrhoB}, \ref{e:distrho}) of being independent of the
artificial parameter $q$.  On the other hand, the vectors $\vrho$ and
$\vrho^*$ have the physical meaning of being the stationary
distribution of the phase at epochs of regeneration, while the
interpretation of $\vnu$ is not as clear, as we discuss below.

We define the set $\EE = \{\theta_n: n \geq 0\}$ of regeneration
epochs, and we partition it into three disjoint subsets:
\begin{align*}
\EE_0 & = \{\theta_n: \theta_n = \theta_{n-1} + \Delta_n < \tau_{n-1},
n \geq 1\}  
\\ 
\EE_1 & = \{\theta_n: \theta_n = \theta_{n-1} + \Delta_n > \tau_{n-1},
n \geq 1\}  
\\ 
\EE_a & = \{\theta_n: \theta_n > \theta_{n-1} + \Delta_n, n \geq 1\}.
\end{align*}
If $\theta_n$ is in $\EE_0$ or in $\EE_1$, it means that the fluid is
equal to zero when the timer expires; in the first case, it has not
left level 0 at all between $\theta_{n-1}$ and $\theta_n$, in the
second case, the fluid has left level 0 and has returned there,
possibly several times.
If  $\theta_n$ is in $\EE_a$, then the fluid
is above level 0 when the timer expires. To keep
the notation simple, we do not indicate that these sets depend on
$\lambda$.

We also define $\EE^\u  = \{\theta_n^\u\}$ to be the set of {\em all} epochs when
the fluid hits level 0 from above: starting from $\theta_0^\u =0$, we define
\begin{align*}
 \tau_n^\u &= \inf\{t > \theta_n^\u : \varphi_\lambda(t) \in \s_\u \},  \\
\theta_{n+1}^\u & = \inf\{t > \tau_n^\u : Z_\lambda(t)=0\}.
\end{align*}
Clearly, $\EE_a \subset \EE^\u$, and $\EE_b$ defined as $\EE_b = \EE^\u
\setminus \EE_a$ is the set of all epochs when the process returns to
0 from above before the expiration of the timer.

By definition, $\vrho_\lambda$ (as well as $\vrho_\lambda^*$) is the
limiting distribution of $\varphi_\lambda(t)$ as $t$ goes to infinity
by taking values in $\EE_0 \cup \EE_1 \cup \EE_a$, while we see from
(\ref{e:nulambda}) that $\vect\nu_\lambda$ is the limiting
distribution as $t$ goes to infinity by taking values in $\EE_0 \cup
\EE^\u =\EE_0 \cup \EE_b \cup \EE_a $.
This observation provides us with  a
physical interpretation for 
(\ref{e:rnlambda}): we rewrite that equation as
\[
\vect\rho_\lambda  (I- P_{\lambda,1} \Psi _\lambda (q))^{-1}= c
\vect\nu _\lambda
\]
and we note that $(I-P_{\lambda,1} \Psi_\lambda(q))^{-1}$ is the
matrix of expected
number of returns to level 0 at epochs in $\EE_b$ between two
successive regeneration points.

We now focus on the traditional MMBM analyzed in
Section~\ref{s:regenerative}, and we compare $\vnu_\lambda$ and
$\vrho^*_\lambda$. In this case, $\EE_0$ and $\EE_1$ vanish as
$\lambda$ grows bigger, the vector $\vect\nu_\lambda$ becomes more
like the stationary distribution of the phase at {\em all epochs} when the
fluid returns to level 0, irrespective of the timer, while
$\vect\rho^*_\lambda$ becomes more like the stationary distribution at
the {\em subset} of those epochs when we have an actual regeneration.
In the limit, the interpretation of $\vnu_\lambda$ may not be given
as such to $\vnu$ due to the
instantaneous repeated hits at the boundary by the Brownian motion,
once it reaches level 0.

The vectors $\vrho^*$ and $\vnu$ are 
both related to the regulator 
 $R(t)$
of $\{X(t)\}$.   Recall that
$R(t) = |\inf_{0 \leq s \leq t} X(s)|$ is split into  the
sub-regulators defined
in~(\ref{e:ri}).   It is shown in Asmussen and
Kella~\cite{ao00} that $r_i(t)/t$ converges almost surely as $t
\rightarrow \infty$, and we define
\begin{equation}
   \label{e:l}
\ell_{ i}   = \lim_{t \rightarrow \infty}  r_i(t)/t.
\end{equation}
Similarly, the regulator $R_\lambda(t) = |\inf_{0 \leq s \leq t}
X_\lambda(s)|$ of the fluid queue is split into the sub-regulators
$r_{\lambda,i}(t)$ in (\ref{e:ril}) 
and we define $\ell_{\lambda, i}   = \lim_{t \rightarrow \infty} r_{\lambda,i}(t)/t$.

We proved in \cite{ln14, ln13} the weak convergence of
$\{Z_\lambda(t), \varphi_\lambda(t)\}$ to $\{Z(t), \varphi(t)\}$. In
consequence, the functions $R_\lambda(t)$, $t \geq 0$, weakly converge
to $R(t)$ and the vectors $\vect\ell_\lambda$ converge to $\vect\ell$,
as $\lambda \rightarrow \infty$.

The function $r_{\lambda,i}(t)$
increases at the rate $|\mu_i - \sqrt\lambda \sigma_i|$ during those
intervals of time when $(Z_\lambda(t), \varphi_\lambda(t))=(0,i)$, and
so
\begin{align*}
\ell_{\lambda,i} &= (\sigma_i \sqrt\lambda - \mu_i)(\vrho^*_\lambda
\vm^*_\lambda)^{-1} (\vrho^*_\lambda M^*_\lambda(0))_i
\\
 & = (\sigma_i \sqrt\lambda - \mu_i)  (\vrho^*\vm^* +
 O(1/\sqrt\lambda))^{-1}  (\frac{1}{\sqrt\lambda} \vrho^*
 (-U(q))^{-1} \Theta^{-1} + O(1/\lambda))_i
\\
 & = (\vrho^*\vm^*)^{-1}  (\vrho^*  (-U(q))^{-1})_i + O(1/\sqrt\lambda)
\end{align*}
from which we obtain
\begin{align}
   \label{e:lnrho}
\vect\ell & = (\vrho^*\vm^*)^{-1}  (\vrho^*  (-U(q))^{-1})
\\
   \label{e:lnnu}
 & = c_3 \vnu \Theta
\end{align}
for some scalar $c_3$ by (\ref{e:rhoNnu}).
This provides us with another expression for the stationary distribution of
MMBMs.

\begin{cor}
   \label{t:distl}
The stationary probability distribution function of the MMBM is given by
\begin{equation}
   \label{e:distl}
\vG^*(x) = 2 \vect\ell \Theta^{-1}(-K)^{-1} (I - e^{Kx}) \Theta^{-1}
\end{equation}
where $\vect\ell$ is defined in (\ref{e:l}).  The vector $\vect\ell$
is the solution of the linear system 
\begin{equation}
   \label{e:lnU}
\vect\ell U = \vzero, \qquad 2\vect\ell \Theta^{-1}(-K)^{-1}
\Theta^{-1} \vone =1.
\end{equation}
\end{cor}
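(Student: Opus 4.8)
The plan is to obtain (\ref{e:distl}) by a direct substitution, feeding the identity (\ref{e:lnrho}) into the distribution formula (\ref{e:distrho}) of Theorem~\ref{t:distribution}, and then to read off the characterisation (\ref{e:lnU}) from the defining relations of $\vect\ell$ together with the fact that $\vG^*$ is a genuine probability law. Recall that (\ref{e:distrho}) reads $\vG^*(x) = \gamma^*_\rho\,\vect\rho^*(-U(q))^{-1}\Theta^{-1}(-K)^{-1}(I-e^{Kx})\Theta^{-1}$ with $\gamma^*_\rho = 2(\vrho^*\vm^*)^{-1}$. Grouping the scalar with the leading row vector, $\gamma^*_\rho\,\vect\rho^*(-U(q))^{-1} = 2(\vrho^*\vm^*)^{-1}\vect\rho^*(-U(q))^{-1}$, and this equals $2\vect\ell$ by (\ref{e:lnrho}). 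Substituting into (\ref{e:distrho}) gives $\vG^*(x) = 2\vect\ell\,\Theta^{-1}(-K)^{-1}(I-e^{Kx})\Theta^{-1}$, which is (\ref{e:distl}); the factor $2$ comes for free from $\gamma^*_\rho$, so no constant has to be recomputed, and the physical normalisation of $\vect\ell$ is carried through unchanged by the scalar $(\vrho^*\vm^*)^{-1}$.

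Next I would check the linear system (\ref{e:lnU}). For the homogeneous equation, (\ref{e:lnrho}) exhibits $\vect\ell$ as a positive scalar multiple of $\vect\rho^*(-U(q))^{-1}$, while (\ref{e:rho}) states $\vect\rho^*(-U(q))^{-1}U = \vzero$; hence $\vect\ell U = \vzero$. (Equivalently, one may use (\ref{e:lnnu}), $\vect\ell = c_3\vnu\Theta$, together with $\vnu\Theta U = \vzero$ from (\ref{e:nust}).) For the normalisation I would let $x\to\infty$ in (\ref{e:distl}): under the negative-drift Assumption~\ref{a:drift} all eigenvalues of $K$ have strictly negative real part — the same fact already used implicitly in forming $\gamma^*_\nu$ in Theorem~\ref{t:distribution} — so $e^{Kx}\to 0$ and $\vG^*(\infty) = 2\vect\ell\,\Theta^{-1}(-K)^{-1}\Theta^{-1}$. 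Since $\vG^*$ is a proper stationary distribution, $\vG^*(\infty)\vone = 1$, which is exactly $2\vect\ell\,\Theta^{-1}(-K)^{-1}\Theta^{-1}\vone = 1$.

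Finally I would note that (\ref{e:lnU}) determines $\vect\ell$ uniquely: $U$ is an irreducible generator, so its left null space is one-dimensional, whence $\vect\ell U = \vzero$ fixes $\vect\ell$ up to a scalar and the normalisation selects that scalar. The argument is therefore entirely routine once the preceding results are in hand, and I do not expect a genuine obstacle; the only non-algebraic content consists of two facts imported from the surrounding theory — that $e^{Kx}\to 0$, so that $\vG^*(\infty)$ exists and the normalisation in (\ref{e:lnU}) is meaningful, and that $U$ has a one-dimensional left kernel, which is what makes the linear system a true characterisation rather than merely a necessary condition. Both are supplied by Lemma~\ref{t:stickyboundary} and the references therein, so the corollary reduces to combining Theorem~\ref{t:distribution} with (\ref{e:lnrho}).
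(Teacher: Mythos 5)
Your proposal is correct and follows essentially the same route as the paper's own proof: substituting (\ref{e:lnrho}) into (\ref{e:distrho}) to get (\ref{e:distl}), deducing $\vect\ell U = \vzero$ from the identities relating $\vect\ell$ to $\vrho^*$ and $\vnu$, and fixing the constant via $\vG^*(\infty)\vone = 1$. Your added remarks on the spectrum of $K$ and on uniqueness of the solution of (\ref{e:lnU}) merely make explicit what the paper leaves implicit.
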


\begin{proof}
  Equation (\ref{e:distl}) is a direct consequence of
  (\ref{e:distrho}) and of the relation~(\ref{e:lnrho}) between
  $\vect\ell$ and $\vrho^*$.  Also, by (\ref{e:lnnu}), we find that
  $\vect\ell U = \vzero$ since $\vnu \Theta U = \vzero$ by~(\ref{e:nust}).
  Finally, we use the normalizing equation $\vG^*(\infty) \vone = 1$
  and the proof is complete.
\end{proof}

One last representation of the stationary distribution $\vG^*$
establishes a direct connection with the stationary distribution
$\valpha$ of the Markov process $\{\varphi(t)\}$ with generator $Q$.

\begin{cor}
   \label{t:distq}
The stationary probability distribution function of the MMBM is given by
\begin{equation}
   \label{e:distq}
\vG^*(x) = \valpha \Theta (I - e^{Kx}) \Theta^{-1}
\end{equation}
where $\valpha$ is the solution of the system $\valpha Q = \vzero$, $\valpha \vone
=1$. 
\end{cor}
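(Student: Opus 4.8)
The plan is to derive the representation (\ref{e:distq}) from the $q$-independent form (\ref{e:distnu}), namely $\vG^*(x) = \gamma^*_\nu \vnu(-K)^{-1}(I-e^{Kx})\Theta^{-1}$, which I have already established in Theorem~\ref{t:distribution}. Since both (\ref{e:distnu}) and the target (\ref{e:distq}) are of the form $(\text{row vector})\cdot(I-e^{Kx})\Theta^{-1}$, the whole statement reduces to showing that the prefactors agree, i.e.\ that $\gamma^*_\nu\,\vnu(-K)^{-1} = \valpha\Theta$. I would prove this in two stages: first establish the proportionality $\vnu(-K)^{-1} = c^{-1}\valpha\Theta$ up to a scalar, and then pin down the scalar by normalization.

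The algebraic heart of the argument is the identity $\Theta K \Theta = \Theta^2 U + 2D$, which follows directly from the definition (\ref{e:KnU}) of $K$ together with the fact that $D$ and $\Theta$ are diagonal and commute. Combining this with the quadratic equation (\ref{e:quadraticU}) at $q=0$, written as $\Theta^2 U^2 + 2DU + 2Q = 0$, gives $(\Theta^2 U + 2D)\,U = -2Q$. I would then compute $[\valpha\Theta(-K)](\Theta U) = -\valpha(\Theta K\Theta)U = -\valpha(\Theta^2 U + 2D)U = 2\valpha Q = 0$, where the last equality uses $\valpha Q = \vzero$. Thus the vector $\valpha\Theta(-K)$ lies in the left null space of $\Theta U$. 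Because $U$ is an irreducible generator (Lemma~\ref{t:stickyboundary}) and $\Theta$ is invertible, that null space is one-dimensional and spanned by $\vnu$, which satisfies $\vnu\Theta U = \vzero$ by (\ref{e:nust}). Hence $\valpha\Theta(-K) = c\,\vnu$ for some scalar $c$, equivalently $\vnu(-K)^{-1} = c^{-1}\valpha\Theta$. Substituting into (\ref{e:distnu}) yields $\vG^*(x) = (\gamma^*_\nu/c)\,\valpha\Theta(I-e^{Kx})\Theta^{-1}$.

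The remaining step, which I expect to be the main point requiring care, is to show that the overall constant $\gamma^*_\nu/c$ equals exactly $1$. Here I would invoke that $K$ is stable, so that $e^{Kx}\to 0$ as $x\to\infty$; this gives $\vG^*(\infty) = (\gamma^*_\nu/c)\,\valpha\Theta\Theta^{-1} = (\gamma^*_\nu/c)\,\valpha$. Using the total-mass condition $\vG^*(\infty)\vone = 1$ together with $\valpha\vone = 1$ forces $\gamma^*_\nu/c = 1$, which completes the proof and incidentally confirms the natural fact that the marginal phase distribution of the regulated MMBM is the stationary vector $\valpha$ of $\{\varphi(t)\}$. The only genuine obstacle is the justification of the one-dimensionality of the null space and the appeal to stability of $K$; both are consequences of Assumption~\ref{a:drift} and the structural properties of $U$ and $K$ recorded earlier, so no new estimates are needed.
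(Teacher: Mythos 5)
Your proof is correct, but it follows a different route from the paper's own proof of Corollary~\ref{t:distq}. The paper's proof is a two-line probabilistic argument: since the phase process $\{\varphi(t)\}$ evolves autonomously with generator $Q$, unaffected by the level, its marginal stationary distribution is obviously $\valpha$; hence $\valpha = \vG^*(\infty) = \gamma^*_\nu \vnu (-K)^{-1}\Theta^{-1}$, and substituting this identity back into (\ref{e:distnu}) yields (\ref{e:distq}) immediately. You instead prove the identity $\gamma^*_\nu\,\vnu(-K)^{-1} = \valpha\Theta$ algebraically: from (\ref{e:KnU}) and the $q=0$ case of (\ref{e:quadraticU}) you obtain $\valpha\Theta K\Theta U = \vzero$, conclude $\valpha\Theta K = c\,\vnu$ from the one-dimensionality of the left null space of $\Theta U$ (valid since $U$ is an irreducible generator and $\Theta$ is invertible), and fix the constant through $\vG^*(\infty)\vone = 1$ and $e^{Kx}\to 0$. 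All of these steps are sound under the paper's standing assumptions. As it happens, the paper records essentially this exact computation immediately after its proof, as an alternative ``purely algebraic argument'' included to illustrate how $Q$, $U$, $K$, $\vnu$ and $\valpha$ interlock. The trade-off: the probabilistic proof is shorter and isolates the conceptual reason the formula holds --- the phase evolves independently of the level, which is precisely why no analogue of (\ref{e:distq}) exists for the sticky processes of Sections~\ref{s:sticky-mmbms} and~\ref{s:resampling} --- whereas your algebraic derivation needs no probabilistic input and shows that the result is forced by the matrix equations alone.
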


\begin{proof}
Obviously, the marginal distribution of the phase
is $\valpha$, so that by~(\ref{e:distnu})
\begin{equation}
   \label{e:qnnu}
\valpha = \vG^*(\infty) = \gamma_\nu^* \vnu (-K)^{-1} \Theta^{-1}
\end{equation}
and (\ref{e:distnu}) may be rewritten as (\ref{e:distq}).  
\end{proof}

Corollary \ref{t:distq} may also be proved by a purely algebraic
argument.  We give it below as it illustrates the intricate
interconnection between different matrices.   We proceed through the
sequence of equations
\begin{align*}
&& \valpha Q  & = \vzero  \\
\Leftrightarrow &&\valpha (\Theta^2 U^2 + 2 D U)  & = \vzero  \qquad \mbox{by
  (\ref{e:quadraticU}) evaluated at $q =0$,}\\
\Leftrightarrow &&\valpha \Theta K \Theta U  & = \vzero  \qquad \mbox{by (\ref{e:KnU})}
\\
\Leftrightarrow &&\valpha \Theta K & = c_4 \vect\nu
\end{align*}
for some scalar $c_4$ by (\ref{e:nust}).  Therefore, (\ref{e:distnu}) becomes
\[
\vG^*(x) = c_5 \valpha \Theta (I-e^{Kx}) \Theta^{-1}
\]
for some scalar $c_5$, and it is easily seen that $c_5=1$ since
$\vG^*(\infty) \vone = 1$.

\begin{rem} \em
Clearly, the stationary distribution of MMBMs may be expressed under
many different guises, even without counting the ones based on the
time-reversed process, as in \cite{asmus95b, roger94}.  
We find the matrix $(I-e^{Kx}) \Theta^{-1}$ in each case,  pre-multiplied by
vectors which depend on the behavior of the process at the boundary.



The connections (\ref{e:distl}) with the vector $\vect\ell$, and
(\ref{e:distq}) with the vector $\valpha$ crucially depend on the
evolution of the phase being independent of the fluid level.  Indeed,
both $\vect\ell$ and $\valpha$ are defined by the unrestricted MMBM:
\begin{itemize}
\item[(a)]
The vector $\valpha$ is the stationary marginal distribution of the phase when its
evolution is governed by the matrix $Q$ and is not modified in any
way; this is the key to the proof of Corollary \ref{t:distq}.  
\item[(b)]
The vector $\vect\ell$ is defined in (\ref{e:l}) as the rate of
increase of the regulator in the absence of any barrier.
Corollary~\ref{t:distl} requires that the stationary distribution at
regenerations be related to $\vect\ell$ through (\ref{e:lnrho}), which
is not true of the process analyzed in
Section~\ref{s:sticky-mmbms} and the one defined in the next section.
\end{itemize}
\end{rem}

\section{Resampling the phase}
\label{s:resampling}

In Section~\ref{s:sticky-mmbms}, we slow down the evolution of the
process at level 0 and we use different factors $a_i$ for different
phases, but the behavior of the phase is not otherwise
modified.  We go one step further now and allow for
more general perturbations.  In the generator (\ref{e:q0})
of the fluid queue $\{Y_\lambda(t), \bar \kappa_\lambda(t),
\bar\varphi_\lambda(t)\}$ at level 0, transitions of
$\bar\kappa_\lambda$ from 2
to 1 occur at rates proportional to $\sqrt\lambda$ while transitions
of $\bar\varphi_\lambda$ occur at much smaller rates of order
$1/\sqrt\lambda$.  We shall now assume that both
$\bar\kappa_\lambda$ and $\bar\varphi_\lambda$ may evolve at rates
proportional to $\sqrt\lambda$.

We define a new family $\{\widetilde Y_\lambda(t),
\widetilde\kappa_\lambda(t), \widetilde\varphi_\lambda(t)\}$ of fluid queues
with generator $Q_\lambda^*$ given in~(\ref{e:qstar}) when $\widetilde Y_\lambda > 0$, and
generator
\begin{equation}
   \label{e:qt0}
\widetilde Q_{\lambda,0} =  \vligne{\sqrt\lambda A  &  (1/\sqrt\lambda)
  \widetilde Q + \sqrt\lambda \widetilde A}
\end{equation}
when $\widetilde Y_\lambda = 0$.  That is, simultaneous transitions are
possible from $(\bar\kappa_\lambda,\bar\varphi_\lambda) =(2,i)$ to
$(1,j)$ for $i \not=j$ at the rate $\sqrt\lambda A_{ij}$.  In
(\ref{e:q0}), $A$ is a diagonal matrix, $\widetilde A = -A$, and $\widetilde Q
= A Q$.

\begin{ass}
   \label{a:B}
We assume that $A \geq 0$, $\widetilde A_{ij} \geq 0$ for $i \not= j$,
$\widetilde A_{ii} < 0$, $i$, $j$ in $\MM$, and that $A + \widetilde
A$ is an irreducible 
generator.  The matrix $\widetilde Q$ is such that $(1/\sqrt\lambda)
  \widetilde Q + \sqrt\lambda \widetilde A$ is an irreducible generator for
  $\lambda$ large enough.
\end{ass}

In consequence, the matrix $B$ defined as $B = (-\widetilde A)^{-1} A$
is stochastic and irreducible, and we denote its stationary
probability vector as $\vbeta$. The assumption that $A + \widetilde A$
is irreducible is a significant restriction: by contrast,
$A+\widetilde A =0$ for the transition matrix (\ref{e:q0}), and $B$ is
the identity matrix. We make this assumption so as to simplify the
presentation of the process and to let its major feature stand out.

The matrix $\widetilde Q$ plays a minor role since the speed of changes
induced by $ (1/\sqrt\lambda)  \widetilde Q$ is negligible with respect to
$\sqrt\lambda$.  Actually, our expression in Theorem
\ref{t:stickydistB} for the limiting  stationary
distribution does not depend on $\widetilde Q$.  

Because of the additional mixing of the phases allowed by the matrices
$A$ and $\wA$, we have a more complex transformation than the simple
change of clock in Section~\ref{s:sticky-mmbms}.  Away from 0,
$\{\widetilde Y_\lambda(t), \widetilde \varphi_\lambda(t)\}$ behaves exactly
like $\{Z_\lambda(t), \varphi_\lambda(t)\}$ but at level 0, as $\lambda$ grows
bigger, the evolution of the phase is controlled mostly through the
matrices $\sqrt\lambda \wA$ and $\sqrt\lambda A$ and, in first
approximation, the distribution of the phase is repeatedly transformed
by the transition matrix $B$ upon each visit to the boundary.

\begin{lem}
   \label{t:stickyB}
The stationary distribution of $\widetilde\varphi_\lambda$ at epochs
of regeneration converges as $\lambda \rightarrow \infty$ to
$\wrho$ with
\begin{equation}
   \label{e:stickyrhoB}
\wrho  = \widetilde\gamma \vect\beta( q (-\wA)^{-1}  + \Theta(U- U(q) )), 
\end{equation}
where $\vect\beta$ is the stationary probability vector of $B$ and
\begin{equation}
   \label{e:gammatilde}
\widetilde\gamma = (\vect\beta( q (-\wA)^{-1}  - \Theta U(q)
)\vone)^{-1}
\end{equation}
is the normalization constant.
\end{lem}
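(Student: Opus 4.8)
The plan is to apply Lemma~\ref{t:Phi}, which holds verbatim here: the return matrices $\Psi_\lambda$, $\Psi_\lambda(q)$ and $\Upsilon_\lambda$ are computed while the fluid is strictly positive and hence are blind to the modified boundary generator~(\ref{e:qt0}). Consequently the stationary vector $\vrho_\lambda$ is still given by~(\ref{e:rnlambda}) in terms of the vector $\vnu_\lambda$ of~(\ref{e:nulambda}), and the whole task reduces to recomputing the two boundary matrices $P_{\lambda,0}$ and $P_{\lambda,1}$ for the generator~(\ref{e:qt0}) and then letting $\lambda \to \infty$.

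First I would compute the boundary matrices. While $\widetilde Y_\lambda = 0$ the phase evolves with sub-generator $S_\lambda = (1/\sqrt\lambda)\widetilde Q + \sqrt\lambda\wA$, and the fluid leaves the boundary through the rate matrix $\sqrt\lambda A$; thus $P_{\lambda,0} = q(qI - S_\lambda)^{-1}$ and $P_{\lambda,1} = (qI - S_\lambda)^{-1}\sqrt\lambda A$. Factoring $\sqrt\lambda(-\wA)$ out of $qI - S_\lambda$ and expanding the inverse gives
\begin{equation*}
P_{\lambda,0} = \frac{q}{\sqrt\lambda}(-\wA)^{-1} + O(1/\lambda), \qquad P_{\lambda,1} = B + O(1/\sqrt\lambda),
\end{equation*}
with $B = (-\wA)^{-1}A$. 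The decisive difference from Lemma~\ref{t:stickyboundary} is that $P_{\lambda,1} \to B$, a \emph{stochastic} matrix, rather than $\to I$: at the boundary the phase is genuinely resampled through $B$.

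Since $\Psi_\lambda \to I$ by~(\ref{e:psilambda}), the fixed-point equation~(\ref{e:nulambda}) converges to $\vnu B = \vnu$, so by irreducibility of $B$ and $\vnu_\lambda\vone = 1$ we obtain $\vnu_\lambda \to \vbeta$. The hard part will be that one cannot then pass to the limit directly in~(\ref{e:rnlambda}): because $P_{\lambda,1}\Psi_\lambda(q) \to B$, the factor $I - P_{\lambda,1}\Psi_\lambda(q)$ tends to the singular matrix $I - B$ and $\vnu_\lambda(I - P_{\lambda,1}\Psi_\lambda(q)) \to \vbeta(I - B) = \vzero$; the whole expression is of order $1/\sqrt\lambda$, forcing the scalar in~(\ref{e:rnlambda}) to grow like $\sqrt\lambda$. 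To extract the leading term without computing the first-order correction to $\vnu_\lambda$, I would exploit the fixed-point relation itself, which yields the identity $\vnu_\lambda(I - P_{\lambda,1}\Psi_\lambda) = \vnu_\lambda P_{\lambda,0}$ and hence
\begin{equation*}
\vnu_\lambda(I - P_{\lambda,1}\Psi_\lambda(q)) = \vnu_\lambda P_{\lambda,0} + \vnu_\lambda P_{\lambda,1}(\Psi_\lambda - \Psi_\lambda(q)).
\end{equation*}

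Both terms on the right now have transparent leading behaviour: $\vnu_\lambda P_{\lambda,0} = \frac{q}{\sqrt\lambda}\vbeta(-\wA)^{-1} + O(1/\lambda)$, while $\vnu_\lambda P_{\lambda,1} \to \vbeta B = \vbeta$ and $\Psi_\lambda - \Psi_\lambda(q) = \frac{1}{\sqrt\lambda}\Theta(U - U(q)) + O(1/\lambda)$ by~(\ref{e:psilambda}), so the second term is $\frac{1}{\sqrt\lambda}\vbeta\Theta(U - U(q)) + O(1/\lambda)$, where I have used $\vbeta B = \vbeta$ to collapse $\vbeta B\Theta$ to $\vbeta\Theta$. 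Adding the two and absorbing the diverging scalar into the normalization gives $\wrho \propto \vbeta(q(-\wA)^{-1} + \Theta(U - U(q)))$, which is~(\ref{e:stickyrhoB}). Finally I would fix $\widetilde\gamma$ from $\wrho\vone = 1$: since $U$ is a generator, $U\vone = \vzero$, so $\vbeta\Theta U\vone = 0$, the term $\Theta U$ drops out of the normalization, and~(\ref{e:gammatilde}) follows.
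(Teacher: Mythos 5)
Your proof is correct, and it reaches (\ref{e:stickyrhoB})--(\ref{e:gammatilde}) by a mechanism genuinely different from the paper's. The paper does not go through (\ref{e:rnlambda})--(\ref{e:nulambda}) at all: it returns to the fixed-point equation (\ref{e:phi1}), computes the same expansions of $P_{\lambda,0}$ and $P_{\lambda,1}$ that you do, and then posits a two-term expansion $\Phi_\lambda = \vone\cdot\wrho + \frac{1}{\sqrt\lambda}\Phi' + O(1/\lambda)$; the zeroth-order equation $\Phi = B\Phi$ only reveals that the limit is rank one (hence does not identify $\wrho$), so the paper equates the coefficients of $1/\sqrt\lambda$, obtains an equation involving the unknown correction $\Phi'$, and eliminates $\Phi'$ by pre-multiplying with $\vbeta$. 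You confront exactly the same degeneracy --- in your setting it appears as $I - P_{\lambda,1}\Psi_\lambda(q) \to I - B$ singular with $\vbeta(I-B)=\vzero$ --- but you resolve it exactly, via the identity $\vnu_\lambda(I - P_{\lambda,1}\Psi_\lambda) = \vnu_\lambda P_{\lambda,0}$ drawn from the stationary equation for $\vnu_\lambda$, rather than by a formal perturbation expansion. This buys you something: every quantity in your computation is explicit, so you never need to justify the existence of the expansion of $\Phi_\lambda$ with an unknown matrix $\Phi'$ (a step the paper leaves informal), and the appearance of $\vbeta$ as the limit of $\vnu_\lambda$, the distribution of the phase at returns to zero, is conceptually transparent. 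What the paper's route buys in exchange is the explicit rank-one limit $\Phi = \vone\cdot\wrho$, which underpins the resampling interpretation of Remark \ref{r:jiggle} and whose projection-by-$\vbeta$ trick is reused verbatim to compute $\wM(x)$ in Lemma \ref{t:stickyTimeB}. One cosmetic point: your error terms $O(1/\lambda)$ in $\vnu_\lambda P_{\lambda,0}$ and $\vnu_\lambda P_{\lambda,1}(\Psi_\lambda - \Psi_\lambda(q))$ are only justified as $o(1/\sqrt\lambda)$ unless you also establish a $O(1/\sqrt\lambda)$ convergence rate for $\vnu_\lambda \to \vbeta$; this does not affect the argument, since only the leading $1/\sqrt\lambda$ coefficients of the numerator and of the normalizing denominator matter in the limit.
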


\begin{proof}
The transition matrix at epochs of regeneration is given by
(\ref{e:phi1}), where
\begin{align*}
P_{\lambda,0} & = q (-\sqrt\lambda \wA + q I - (1/\sqrt\lambda) \wQ)^{-1}  \\
  & = \frac{1}{\sqrt\lambda} q (-\wA)^{-1} + O(1/\lambda)
\\[0.5\baselineskip]
P_{\lambda,1} & = \sqrt\lambda(-\sqrt\lambda \wA + q I - (1/\sqrt\lambda) \wQ)^{-1} A \\
  & = B + \frac{1}{\sqrt\lambda} q \wA^{-1} B
+ O(1/\lambda).
\end{align*}
Taking the limit as $\lambda \rightarrow \infty$ on both sides of
(\ref{e:phi1}), we find that the limit $\Phi$ of $\Phi_\lambda$
satisfies the equation $\Phi = B \Phi$ where $B$ is stochastic.
This shows that $\Phi$ is of rank one, and that
\begin{equation}
   \label{e:phitilde}
\Phi = \vone \cdot \wrho
\end{equation}
for some vector $\wrho$.  The matrix $\Phi$ is stochastic, and so
$\wrho$ is its stationary
probability vector.
Thus, 
\[
\Phi_\lambda = \vone \cdot \wrho + \frac{1}{\sqrt\lambda} \Phi' +
O(1/\lambda) 
\]
for some matrix $\Phi'$ and, by equating the coefficients of
$1/\sqrt\lambda$ on both sides of (\ref{e:phi1}), we get
\[
\Phi' = q (-\wA)^{-1}  + B \Theta (U - U(q)) 
 + B \Phi' + B \Theta U(q) \vone \cdot \wrho
+ q\wA^{-1} B \vone \cdot \wrho.
\]
since $B \vone = \vone$.  
We pre-multiply both sides by $\vect\beta$ and obtain 
\[
\vzero =   \vect\beta( q (-\wA)^{-1}  + \Theta(U- U(q) ))  + (\vect\beta \Theta U(q) \vone + q \vect \beta \wA^{-1}
\vone) \wrho  
\]
from which (\ref{e:stickyrhoB}, \ref{e:gammatilde}) follow.
\end{proof}

\begin{lem}
   \label{t:stickyTimeB}
In the limit as $\lambda\rightarrow \infty$, the expected time spent
in $[0,x]$  between regeneration points is 
\begin{align}
   \label{e:Mtilde}
\wM(x)  & =  \widetilde\gamma \, \vone\cdot\vect\beta
(-\wA^{-1} +2 (-K)^{-1} (I-e^{K x} )\Theta^{-1})   
\intertext{for $x \geq 0$,  and}
   \label{e:smtilde}
 \widetilde{\vect m}  & =  \widetilde\gamma
\,(\vect\beta (-\wA^{-1} +2 (-K)^{-1} \Theta^{-1})   \vone)
\, \vone.
\end{align}
\end{lem}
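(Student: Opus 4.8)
The plan is to mirror the first-passage decomposition used in the proof of Lemma~\ref{t:stickyTime}, but to handle the fact that the relevant leading-order matrix is now \emph{singular}. Writing $\wM_\lambda(x)$ for the expected time spent in $[0,x]$ between regenerations, the same decomposition at level $0$ gives, for $x \geq 0$,
\[
\wM_\lambda(x) = (-\sqrt\lambda\wA + qI - (1/\sqrt\lambda)\wQ)^{-1} + P_{\lambda,1} M_f(x) + P_{\lambda,1}\Psi_\lambda(q)\wM_\lambda(x),
\]
where the first term is the expected sojourn at level $0$ in a single visit (until $\widetilde\kappa_\lambda$ switches to $1$ or the timer expires) under the resampling dynamics~(\ref{e:qt0}), coinciding with $(1/q)P_{\lambda,0}$ computed in the proof of Lemma~\ref{t:stickyB}; the matrix $M_f(x)$ of expected time in $(0,x]$ until the next return to $0$ is exactly the one from Lemma~\ref{t:stickyTime}, as it depends only on the dynamics away from the boundary; and the $M_f$ term is absent when $x=0$. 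Since $P_{\lambda,1}\Psi_\lambda(q)$ is strictly substochastic, I solve to get $\wM_\lambda(x) = (I - P_{\lambda,1}\Psi_\lambda(q))^{-1}[(-\sqrt\lambda\wA + qI - (1/\sqrt\lambda)\wQ)^{-1} + P_{\lambda,1} M_f(x)]$.

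Next I insert the expansions already established: those of $P_{\lambda,0}$ and $P_{\lambda,1}$ from the proof of Lemma~\ref{t:stickyB}, the expansion $\Psi_\lambda(q) = I + (1/\sqrt\lambda)\Theta U(q) + O(1/\lambda)$ from~(\ref{e:psilambda}), and $M_f(x) = (2/\sqrt\lambda)(-K)^{-1}(I-e^{Kx})\Theta^{-1} + O(1/\lambda)$ from the proof of Lemma~\ref{t:stickyTime}. These yield $(-\sqrt\lambda\wA + qI - (1/\sqrt\lambda)\wQ)^{-1} = (1/\sqrt\lambda)(-\wA)^{-1} + O(1/\lambda)$ and $P_{\lambda,1}M_f(x) = (2/\sqrt\lambda)B(-K)^{-1}(I-e^{Kx})\Theta^{-1} + O(1/\lambda)$, so the bracketed factor is $O(1/\sqrt\lambda)$. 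The crucial point, and the main obstacle, is that $I - P_{\lambda,1}\Psi_\lambda(q) = (I-B) - (1/\sqrt\lambda)C + O(1/\lambda)$ with $C = q\wA^{-1}B + B\Theta U(q)$, whose leading term $I-B$ is singular. This is precisely the rank-one degeneracy exploited in Lemma~\ref{t:stickyB}, and it rules out the term-by-term inversion available in Lemma~\ref{t:stickyTime}.

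To resolve the singularity I use first-order spectral perturbation of the inverse near the simple zero eigenvalue of $I-B$, whose right and left null vectors are $\vone$ and $\vbeta$ (normalized $\vbeta\vone=1$, by irreducibility of $B$). This gives $(I - P_{\lambda,1}\Psi_\lambda(q))^{-1} = \sqrt\lambda\,(\vone\cdot\vbeta)/(-\vbeta C\vone) + O(1)$. A short computation using $B\vone=\vone$ and $\vbeta B = \vbeta$ gives $\vbeta C\vone = -\vbeta(q(-\wA)^{-1} - \Theta U(q))\vone = -1/\widetilde\gamma$ by~(\ref{e:gammatilde}), so the divergent factor is exactly $\sqrt\lambda\,\widetilde\gamma\,\vone\cdot\vbeta + O(1)$. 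Multiplying the $O(1/\sqrt\lambda)$ bracket by this $\sqrt\lambda$ blow-up, the two $\sqrt\lambda$ factors cancel to leave a finite limit; the surviving contractions again use $\vbeta B=\vbeta$, and I obtain $\wM(x) = \widetilde\gamma\,\vone\cdot\vbeta((-\wA)^{-1} + 2(-K)^{-1}(I-e^{Kx})\Theta^{-1})$, which is~(\ref{e:Mtilde}) since $(-\wA)^{-1} = -\wA^{-1}$. Finally~(\ref{e:smtilde}) follows by letting $x\to\infty$, so that $e^{Kx}\to 0$ because the eigenvalues of $K$ have negative real parts, and post-multiplying by $\vone$.

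As an equivalent bookkeeping device to the spectral formula, and closer to the technique already used for $\Phi'$ in the proof of Lemma~\ref{t:stickyB}, one may instead posit an expansion $\wM_\lambda(x) = \wM(x) + (1/\sqrt\lambda)\wM'(x) + \cdots$. The leading order of $N_\lambda\wM_\lambda(x) = \text{RHS}$ reads $(I-B)\wM(x)=\vzero$, forcing the rank-one form $\wM(x)=\vone\cdot\vect w(x)$; the order $1/\sqrt\lambda$ then imposes the Fredholm solvability condition $\vbeta[\,(-\wA)^{-1}+2B(-K)^{-1}(I-e^{Kx})\Theta^{-1} + C\,\vone\cdot\vect w(x)\,]=\vzero$, which, using $\vbeta C\vone=-1/\widetilde\gamma$, determines $\vect w(x)=\widetilde\gamma\,\vbeta((-\wA)^{-1}+2(-K)^{-1}(I-e^{Kx})\Theta^{-1})$ and reproduces~(\ref{e:Mtilde}). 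I expect the only delicate point to be justifying that $\wM_\lambda(x)$ genuinely admits such an $O(1)$ limit with the stated expansion, which follows from the simplicity of the zero eigenvalue of $I-B$ guaranteed by the irreducibility assumed in Assumption~\ref{a:B}.
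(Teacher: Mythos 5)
Your proposal is correct and follows essentially the same route as the paper: the same level-zero decomposition, the same expansions of $P_{\lambda,0}$, $P_{\lambda,1}$, $\Psi_\lambda(q)$ and $M_f(x)$, and the same resolution of the rank-one degeneracy of $I-B$ --- indeed your second, ``bookkeeping'' variant (expanding $\wM_\lambda(x)$ and imposing the solvability condition obtained by pre-multiplying by $\vbeta$) is exactly the paper's argument, while your primary spectral-perturbation formula for $(I-P_{\lambda,1}\Psi_\lambda(q))^{-1}$ is an equivalent repackaging of it. Incidentally, your calculation confirms that the matrix appearing in the limit is $(-\wA)^{-1}=-\wA^{-1}$, in agreement with (\ref{e:Mtilde}), whereas the paper's proof writes $A^{-1}$ at that spot, which is a typo.
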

\begin{proof}
We decompose $\wM_\lambda(x)$  as in Lemmas
\ref{t:stickyTime} and  \ref{t:expectedTime}: 
\begin{align*}
\wM_\lambda(x)  & = (- \sqrt\lambda \wA + q I -
\frac{1}{\sqrt\lambda}\wQ)^{-1}  
  + P_{\lambda,1} M_f(x) + P_{\lambda,1} \Psi_\lambda(q) \wM_\lambda (x) 
\\
  & = 
\frac{1}{\sqrt\lambda} (-\wA)^{-1}  + (B + \frac{1}{\sqrt\lambda}
q \wA^{-1} M_f(x)  )
\\ & \quad
+ (B + \frac{1}{\sqrt\lambda} q \wA^{-1} (I +
\frac{1}{\sqrt\lambda} \Theta U(q)) \wM_\lambda (x)
+ O(1/\lambda).
\end{align*}
In the limit, $\wM_\lambda(x)$ converges to a solution of
$\wM(x) = B \wM(x)$, so that
\[
\lim_{\lambda\rightarrow\infty} \wM_\lambda(x) = \vone \cdot \widetilde{\vect\mu}(x)
\]
for some vector $\widetilde{\vect\mu}(x)$ which needs to be determined.
We proceed just like we did in the proof of Lemma~\ref{t:stickyB}, and
obtain
\[
\widetilde{\vect\mu}(x) = \widetilde\gamma
\vect\beta \,
(A^{-1} +2 (-K)^{-1} (I-e^{K x} )\Theta^{-1})   .
\]
The proof of (\ref{e:smtilde}) is immediate.
\end{proof}

\begin{rem} 
   \label{r:jiggle}
\em We observe in (\ref{e:phitilde}) the
  effect of the Brownian motion jiggle at the boundary: by the time
  the exponential timer is off, the process will have hit level 0 so
  often that the phase at the next regeneration epoch will be
  independent of the phase at the last one.

  The same effect is at work in (\ref{e:Mtilde}): after a regeneration
  point, the phase will be repeatedly re-sampled through the matrix
  $B$, so often  that the expected length of any interval between
  regeneration points is independent of the phase at the end of the
  previous interval,  and depends only on the stationary distribution of
  $B$.   

In short, we refer to the limit as a process with sticky boundary and
resampling of the phase at level zero.
\end{rem}

\begin{thm}
   \label{t:stickydistB}
   The stationary probability distribution function of the process
   $\{\widetilde Y(t), \widetilde \varphi(t)\}$ with sticky boundary and
   resampling of the phase at zero is given by
\begin{equation}
   \label{e:distbeta}
\widetilde \vG (x) = \widetilde\gamma \vect\beta(-\wA^{-1} + 2 (- K)^{-1} (I-e^{K x}) \Theta^{-1}),
\end{equation}
independently of $q$, where $\vect\beta$ is the stationary
probability vector of $B$ and 
$\widetilde\gamma$ is given in (\ref{e:gammatilde}).

The marginal distribution of the phase is $\widetilde \vG(\infty) = \widetilde\gamma
\vect\beta(-\wA^{-1} + 2 (- K)^{-1} \Theta^{-1})$.
\end{thm}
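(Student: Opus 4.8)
The plan is to mimic the proof of Theorem~\ref{t:stickydist}, feeding Lemmas~\ref{t:stickyB} and~\ref{t:stickyTimeB} into the Markov-regenerative identity~(\ref{e:G}). For each $\lambda$ the process $\{\widetilde Y_\lambda(t),\widetilde\varphi_\lambda(t)\}$ is an ordinary regenerative fluid queue, so its stationary distribution is $(\wrho_\lambda\widetilde{\vect m}_\lambda)^{-1}\wrho_\lambda\wM_\lambda(x)$, and I would pass this to the limit as $\lambda\to\infty$. The right-hand side converges by Lemmas~\ref{t:stickyB} and~\ref{t:stickyTimeB}. To see that the left-hand side converges to the stationary distribution $\widetilde\vG(x)$ of $\{\widetilde Y(t),\widetilde\varphi(t)\}$, I would argue weak convergence and tightness of the family $\{\widetilde Y_\lambda(t),\widetilde\varphi_\lambda(t)\}$: away from level~0 it evolves exactly as $\{Z_\lambda(t),\varphi_\lambda(t)\}$, which converges to the MMBM by \cite[Theorem~2.7]{ln13}, so only the boundary behaviour governed by $\widetilde Q_{\lambda,0}$ in~(\ref{e:qt0}) requires attention; tightness would be obtained by adapting \cite[Theorem~2.6]{ln13} together with Lemma~\ref{t:Phi}, and the convergence of the stationary distributions by adapting \cite[Theorem~3.1]{ln14}, exactly as in Theorem~\ref{t:stickydist}.

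Granting the convergence, the computation is short and turns entirely on the rank-one structure forced by the stochastic resampling matrix $B$. By Lemma~\ref{t:stickyTimeB} the limiting occupation matrix is rank one, $\wM(x)=\vone\cdot\widetilde{\vect\mu}(x)$ with $\widetilde{\vect\mu}(x)=\widetilde\gamma\,\vect\beta(-\wA^{-1}+2(-K)^{-1}(I-e^{Kx})\Theta^{-1})$, and $\widetilde{\vect m}=\wM(\infty)\vone=(\widetilde{\vect\mu}(\infty)\vone)\,\vone$. Since $\wrho$ is a probability vector, $\wrho\vone=1$, whence $\wrho\wM(x)=\widetilde{\vect\mu}(x)$ and $\wrho\widetilde{\vect m}=\widetilde{\vect\mu}(\infty)\vone$, so that
\[
\widetilde\vG(x)=\frac{\wrho\wM(x)}{\wrho\widetilde{\vect m}}=\frac{\widetilde{\vect\mu}(x)}{\widetilde{\vect\mu}(\infty)\vone}.
\]
The factor $\widetilde\gamma$ common to $\wrho$, $\wM(x)$ and $\widetilde{\vect m}$ cancels between numerator and denominator --- this is precisely why the outcome is independent of~$q$ --- and what remains is the vector $\vect\beta(-\wA^{-1}+2(-K)^{-1}(I-e^{Kx})\Theta^{-1})$ up to a $q$-independent scalar, which I would pin down by the normalization $\widetilde\vG(\infty)\vone=1$ to recover~(\ref{e:distbeta}). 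The marginal then follows by letting $x\to\infty$ and using $e^{Kx}\to 0$, the eigenvalues of $K$ having strictly negative real parts.

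The hard part is the first step. In contrast with the process of Section~\ref{s:sticky-mmbms}, the resampling family is not a $1$-Lipschitz time change of $\{Z_\lambda(t)\}$: at the boundary the phase $\widetilde\varphi_\lambda$ is itself resampled at rate $\sqrt\lambda$ through $\sqrt\lambda A$ and $\sqrt\lambda\wA$, so the clean increment bound of Theorem~\ref{t:laztflip-flop}, which reduced tightness of $Y_\lambda$ to tightness of $Z_\lambda$, is no longer available. I expect the modulus-of-continuity estimate underlying \cite[Theorem~2.6]{ln13} to survive, because each excursion away from~0 still behaves like that of $\{Z_\lambda(t)\}$ and the boundary sojourns shrink as $\lambda\to\infty$, but verifying that the additional boundary jumps do not spoil the estimate is where the real work lies. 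A subsidiary point is the bookkeeping of the normalizing constant: the natural $q$-free form of the normalizer is $(\vect\beta(-\wA^{-1}+2(-K)^{-1}\Theta^{-1})\vone)^{-1}$, and I would reconcile it with the expression~(\ref{e:gammatilde}) obtained for $\widetilde\gamma$ in Lemma~\ref{t:stickyB}.
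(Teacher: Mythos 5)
Your argument follows the same route as the paper --- Lemmas \ref{t:stickyB} and \ref{t:stickyTimeB} fed into the regenerative identity (\ref{e:G}), with the rank-one structure $\wM(x)=\vone\cdot\widetilde{\vect\mu}(x)$ and $\wrho\vone=1$ doing all the work --- but your bookkeeping of the normalization is the correct one, and it is not the paper's. The paper's proof never divides by $\wrho\widetilde{\vect m}$: it multiplies the two lemma outputs, uses $U\vone=\vzero$ to collapse the scalar $\vect\beta(q(-\wA)^{-1}+\Theta(U-U(q)))\vone$ into $\widetilde\gamma^{-1}$, and declares the product $\widetilde\gamma\vect\beta(-\wA^{-1}+2(-K)^{-1}(I-e^{Kx})\Theta^{-1})$ to be $\widetilde\vG(x)$; that is how the constant (\ref{e:gammatilde}) enters the statement. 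Your normalized computation gives instead
\[
\widetilde\vG(x)=\frac{\vect\beta\bigl(-\wA^{-1}+2(-K)^{-1}(I-e^{Kx})\Theta^{-1}\bigr)}
{\vect\beta\bigl(-\wA^{-1}+2(-K)^{-1}\Theta^{-1}\bigr)\vone},
\]
and the reconciliation you postpone at the end is in fact impossible: because $U\vone=\vzero$, the constant in (\ref{e:gammatilde}) satisfies $\widetilde\gamma^{-1}=\vect\beta(q(-\wA)^{-1}-\Theta U(q))\vone\rightarrow 0$ as $q\rightarrow 0$, whereas $\vect\beta(-\wA^{-1}+2(-K)^{-1}\Theta^{-1})\vone$ is a fixed positive number independent of $q$. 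So $\widetilde\gamma$ is genuinely $q$-dependent, the right-hand side of (\ref{e:distbeta}) with that constant does not have total mass one in general, and only your normalizer makes the theorem's claim of $q$-independence true. (Compare with Theorem \ref{t:stickydist}, where the $q$-free form (\ref{e:distnuB}) does carry the analogous natural constant $\gamma_\nu$.) In short, the matrix factor in (\ref{e:distbeta}) is right, but your more careful treatment exposes a slip in the stated normalizing constant.

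On what you call the hard part: the paper does none of that work for this theorem. Its proof is purely algebraic; weak convergence and tightness are invoked (by adapting \cite[Theorem 2.6]{ln13} and \cite[Theorem 3.1]{ln14}) only in the proof of Theorem \ref{t:stickydist}, and the limit process $\{\widetilde Y(t),\widetilde\varphi(t)\}$ is never formally constructed --- Remark \ref{r:jiggle} simply names it. Your observation that the $1$-Lipschitz time-change bound of Theorem \ref{t:laztflip-flop} is unavailable here, because the phase itself is resampled at rate $\sqrt\lambda$ at the boundary, is accurate; it identifies a gap the paper leaves open rather than a defect of your own plan.
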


\begin{proof}
By Lemmas \ref{t:stickyB} and \ref{t:stickyTimeB}, 
\begin{align*}
\widetilde \vG (x) & = 
\widetilde\gamma \,\vect\beta( -q \wA^{-1}  + \Theta(U- U(q) )) \ 
\widetilde\gamma
\vone \cdot \vect\beta
(- \wA^{-1} +2 (-K)^{-1} (I-e^{K x} )\Theta^{-1})   
\\ & =
\widetilde\gamma \, \vect\beta
(- \wA^{-1} +2 (-K)^{-1} (I-e^{K x} )\Theta^{-1})   
\end{align*}
after reorganization of some factors, and using the relation $U \vone = \vzero$.
\end{proof}

\section{Concluding remarks}
\label{s:conclusion}

To the best of our knowledge, these are the first results on MMBMs
where the evolution of the phase $\varphi$ may depend on the level,
with the exception of Chen {\it et al.} \cite{cht02}: the authors
consider MMBMs with level-dependent, piecewise constant, fluid rates
and obtain the stationary distribution by numerically solving
systems of partial differential equations.

Our regenerative approach to the analysis of regulated MMBMs clearly
shows great promise in allowing more complex assumptions than has been
the case until now.  We have demonstrated this on two specific cases
of reactive boundaries in Sections~\ref{s:sticky-mmbms} and
\ref{s:resampling} but other examples easily come to mind, as in
Latouche and Nguyen~\cite{ln15a}.

In each case covered here, the stationary distribution is easily
calculated once the matrices $U$ and $U(q)$ are determined.  Extremely
efficient algorithms exist to solve the matrix equation
(\ref{e:quadraticU}), such as those in Latouche and
Nguyen~\cite{ln13},  and Nguyen and Poloni~\cite{np15}, 
and so the question of numerically
obtaining these distributions is not an issue.    \nocite{np15}

There is a striking difference between the ``traditional'' process
analyzed in Section~\ref{s:regenerative} and the two processes with
sticky boundary: the representation of the stationary distribution in
Corollaries \ref{t:distl} and 
\ref{t:distq} hold in the first case but not in the other two.
Instead, the importance of the distribution at epochs of regeneration
is made more manifest, observe that the vector $\vnu$ in~(\ref{e:distnuB}) and
$\vbeta$ in (\ref{e:distbeta}) are related in the same manner to the
distribution at epochs of regeneration: (\ref{e:rhonust}) may be
written as
\[
c_1 \vect \nu A^{-1} = \vect\rho M(0)
\]
and (\ref{e:stickyrhoB}) as
\[
c_6 \vect \beta (-\widetilde A)^{-1} = \wrho \wM(0)
\]
for some scalar $c_6$.
In both equations, the $i$th component of the vector in the right-hand
side is the expected time spent in phase $i$ at level 0 between two
regeneration points.

\subsubsection*{Acknowledgements}
The authors thank the Minist\`ere de la
Communaut\'e fran\c{c}aise de Belgique for supporting  this research
through the ARC grant AUWB-08/13--ULB~5, they acknowledge the
financial support of the Australian Research Council through the 
Discovery Grant DP110101663.

\bibliographystyle{abbrv}
\bibliography{regenMMBM}

\end{document}